\documentclass[12pt,scrartcl,a4paper]{article}

\usepackage{a4wide}
\usepackage{setspace}

\usepackage{apacite}

\usepackage[affil-it]{authblk}

\RequirePackage[colorlinks,citecolor=blue,urlcolor=blue]{hyperref}

\RequirePackage[colorlinks,citecolor=blue,urlcolor=blue]{hyperref}
\usepackage{amsfonts,amsthm,amsmath,amssymb,mathtools,bbm,mathrsfs,epsfig,natbib}
\usepackage{thmtools}
\usepackage{multirow}
\usepackage{booktabs}
\usepackage{float}
\usepackage{tabularx}
\usepackage{enumitem}
\usepackage{ amssymb }
\usepackage{subcaption}
\usepackage{capt-of}
\usepackage{diagbox}
\usepackage{color,tikz}
\usepackage{cancel}
\usepackage{makecell}
\numberwithin{equation}{section}

\newcommand{\Cov}{{\rm Cov}}

\newcommand{\R}{\mathbb{R}}

\newcommand{\N}{\mathbb{N}}

\newcommand{\Q}{\mathbb{Q}}
\newcommand{\E}{\mathrm {E}}

\newcommand{\LL}{{\cal L}}

\renewcommand{\d}{\mathrm{d}}
\newcommand{\1}{\mathbbm{1}}

\newcommand{\scalar}[2]{\mathchoice{\left\langle #1, #2\right\rangle}{\langle #1, #2\rangle}{\langle #1, #2\rangle}{\langle #1, #2\rangle}}

\newcommand{\oP}{{\scriptstyle\mathcal{O}}_P}

\declaretheorem[refname = {Theorem}]{theorem}

\declaretheorem[]{lemma}

\declaretheorem[]{example}

\declaretheorem[refname = {Assumption,Assumptions}]{assumption}

\def\BB{ \mathfrak{B} }

\renewcommand{\epsilon}{\varepsilon}
\renewcommand{\rho}{\varrho}
\renewcommand{\theta}{\vartheta}

\newcommand\blfootnote[1]{%
  \begingroup
  \renewcommand\thefootnote{}\footnote{#1}%
  \addtocounter{footnote}{-1}%
  \endgroup
}

\makeatletter
\newcommand{\vast}{\bBigg@{4}}
\newcommand{\Vast}{\bBigg@{5}}
\makeatother

\begin{document}
\title{A BHEP test for multivariate normality on incomplete data}
\author[1,2,3]{Daniel Gaigall$^*$}
\author[3]{Philipp Wübbolding}

\affil[1]{Institute for Data-driven Technologies, FH Aachen - University of Applied Sciences, Heinrich-Mußmann-Straße 1, 52428 Jülich, Germany}

\affil[2]{House of Insurance, Leibniz University Hannover, Welfengarten 1, 30167 Hannover, Germany}

\affil[3]{Faculty 09 - Medical Engineering and Technomathematics, FH Aachen - University of Applied Sciences, Heinrich-Mußmann-Straße 1, 52428 Jülich, Germany}

\date{}

\maketitle

\blfootnote{$^*$Corresponding author. Email address: gaigall@fh-aachen.de, daniel.gaigall@insurance.uni-hannover.de}

\begin{abstract}
A  BHEP test  for  the null hypotesis of multivariate normality on the basis of  incomplete data is introduced. Estimators for the underlying unknown parameters in this situation are suggested. The test  uses characteristic functions and circumvents the problem of singular covariance matrix estimates. As the sample size tends to infinity, an almost sure limit of the test statistic is obtained under the null hypothesis and under alternatives. The convergence in distribution under the null hypothesis  is also proved. Critical values can be obtained using a bootstrap procedure. Simulation studies investigate size and power of the test  and confirm the adequacy of the approach. A real data example demonstrates the application of the test.

\textit{Keywords: BHEP test, Characteristic function,  Incomplete data, Multivariate normality}  

\textit{MSC2010 classification: 62G10, 62G09} 

\end{abstract}

\section{Introduction}

When addressing a question from multivariate statistics, the statistician can be confronted with a sample of incomplete data vectors. Noticing that multivariate normality is a standard assumption, e.g.,  in statistical modelling,  the question whether the underlying distribution belongs to  the family of multivariate normal distributions is of particular interest. Common approaches to deal with the incomplete data case adapt standard statistical procedures developed for complete data. For testing  multivariate normality, we have the usual BHEP test, see \citet{BaringhausHenze}. The usual BHEP test  uses  characteristic functions and a related $L^2$-distance. The test statistic is based solely on  Mahalanobis distances, which makes the procedure invariant under affine transformations of the data vectors and in particular distribution-free. A review of similar testing procedures for the null hypothesis of  multivariate normality can be found in \citet{RePEc:spr:testjl:v:29:y:2020:i:4:d:10.1007_s11749-020-00740-0}. The most naive way to handle  incomplete data is to discard any  data vectors with missing entries (complete-case analysis). This results in a loss of even more information, especially when the sample size is not very large, the dimension of the data  vectors is not very small or the missing data rate is not very low. Moreover,  imputation techniques, such as mean or median imputation, are often used. Anyway, the resulting distribution of the imputed data is different from the true distribution and the application of related statistical procedures requires great care.  In general, there is no guarantee that the procedure works. The paper \citet{Aleksić09122024} revisits the usual BHEP test,  compares those procedures including complete-case analysis and different imputation techniques and suggest a bootstrap procedure to address this issue. Moreover, the work  \citet{Tsatsi01112024} compares  different tests for multivariate normality under various imputation methods by simulation. To demostrate possible problems that arise with the adaption of standard statistical procedures developed for complete data, let us consider the empirical covariance matrix. Remembering that the regularity of this matrix requires that the sample size is larger than the dimension, discarding any incomplete data vectors could lead to a singular estimate and finally to the  inapplicability of the statistical procedure intended. On the other hand,  imputation typically leads to a systematic  underestimation of the true variances and covariances. A more sophisticated alternative to the adaption of a standard statistical procedure developed for complete data is the application of a method tailored to the incompled data case if available. An example of such a method  is given in  \citet{Gaigall-MargHom-Incompl-data}, where a test for the fully nonparametric testing problem of marginal homogeneity on the basis of possibly incomplete paired data is introduced. Unfortunately, the availability of methods designed for incomplete data is very limited. This applies in particular to the testing  problem of multivariate normality.   \citet{TAN2005164} , \citet{yamada2015kurtosis} and \citet{KURITA2022104824}  treat testing for multivariate normality in special incomplete data cases by focusing  on specific shape parameters of the underlying distribution,  namely kurtosis or skewness. We develop a  BHEP test for  multivariate normality especially for the incomplete data case. Similar as the usual BHEP test, our approach based on  characteristic functions and a related $L^2$-distance that can potentially detect any deviation from the null hypothesis. For more power and  flexibility in applications, the test incorporates a projection approach similar as in   \citet{GaigallLiangWu-HilbertIndependence}. To circumvent the problem of singular covariance matrix estimates, our procedure drops the solely use of  Mahalanobis distances. In contrast to the adaption of standard statistical procedures developed for complete data, our approach is always applicable, including in cases where the sample size is small, the dimension of the data  vectors is large or the missing data rate is high. Due to the incomplete data case, the test statistic is not distribution-free, but a bootstrap procedure is available to obtain critical values in practice. Our BHEP test aligns with a series of recent developments of BHEP  testing procedures for the null hypothesis of gaussianity on the basis of advanced data, that are  \citet{HenzeGamero},  \citet{CDHH} and \citet{GaigallWuebbolding_gBM-Test}, where  similar tests but for (complete) Hilbert space valued or functional data are considered. The paper is structured as follows. In Section \ref{tp}, we formulate the testing problem. The incomplete data setting is also introduced there. Parameter estimation in this situation is discussed in  Section \ref{pe}. Specific suggestions for  estimators that satisfy desired properties are also included. In Section \ref{es}, we introduce the test statistic. Alternative representations are available and useful for computational purposes. Asymptotic results as the sample size tends to infinity are proved in Section \ref{as}. More detailed, an almost sure limit of the test statistic is established, available under the null hypothesis and under alternatives, and the limiting null distribution of the test statistic is  stated. The results motivate to use a bootstrap procedure for the approximation of quantiles of the unknown null distribution of the test statistic, see Section \ref{bp}. The method makes critical values available in applications and ensures a practicable implementation of the test. For the investigation of size and power the test, the outcomes of simulation studies are shown and discussed in Section \ref{s}. Empirical rejection rates obtained confirm the adequacy of the procedure under the null hypothesis and under alternatives. Finally, Section \ref{rde} presents a real data example that demonstrates the application of the test. Here, we consider a  dataset on  air quality in New York City, containing measurements of ozone and solar radiation. Note that all proofs are given in Appendix \ref{p}.

\section{Testing problem and  incomplete data setting}\label{tp}

On a probability space, let $X$ be a  random vector of dimension  $d\in\N$, $d\ge 2$,  with values in $\R^d$ and unknown underlying distribution $\LL(X)$. That is the random vector or  distribution of interest. In our notation, vectors are column vectors, related components are written as $x=(x(1),\dots,x(d))^\top$, $x\in\R^d$,  the standard scalar product  is $\scalar{x}{y}=x^\top y=\sum_{j=1}^dx(j)y(j)$, $x,y\in\R^d$, and the usual norm is denoted by $|x|=\sqrt{\scalar{x}{x}}=\sqrt{xx^\top}=\sqrt{\sum_{j=1}^dx(j)^2}$, $x\in\R^d$. Analogous notations are used for matrices. We consider the testing problem of multivariate normality
\begin{equation*}
    H_0: \LL(X)\in\mathfrak L_X \text{ vs. } H_1:\LL(X)\notin\mathfrak L_X,
\end{equation*}
where  
$$\mathfrak L_X=\{\mathcal N_d(\mu,\Sigma);\mu\in\R^d,\Sigma\in\R^{d\times d}~\text{symmetric positive definite}\}$$
is the family of  multivatiate normal distributions of dimension $d$. We suppose the following regularity assumption on the underlying distribution $\LL(X)$.

\begin{assumption}\label{ass1a}
We suppose that $\E(|X|^2)<\infty$ such that  the expectation vector $\mu=\E(X)\in\R^d$  as well as  the covariance matrix $\Sigma=\Cov(X)\in\R^{d\times d}$ of $X$ exist with finite entries. Moreover, we assume that $\Sigma$ is positive definite.
\end{assumption}

Let $I$ be another  random vector of dimension $d$ on   the probability space with values in $\{0,1\}^d$ and underlying distribution $\LL(I)$ that is unknown in general. This random vector or  distribution is not of interest, but it determines the missingness mechanism in the sense that for all $j=1,\dots,d$ it is $I(j)=1$ if and only if $X(j)$ is observed. Given that  $\mathfrak L_I\neq \emptyset$ is a  known  family of  distributions  of  random vectors of dimension $d$  with values in $\{0,1\}^d$, we suppose that
$$\LL(I)\in\mathfrak L_I.$$
The set $\mathfrak L_I$ consists of all missingness mechanisms that are considered or allowed in our incomplete data setting. The probability mass function of $I$ is denoted by 
$$p(a)=P(I=a),~a\in\{0,1\}^d.$$
Let us consider some examples of missingness mechanism that can be allowed  in our incomplete data setting.

\begin{example}\label{exa0}
Let us consider some examples of possible choices of the family of distributions $\mathfrak L_I$.

\medskip 
(a) The most simple case is that $\mathfrak L_I$  has  only one element. This situation corresponds to the case that the distribution of $I$ is known and given.

\medskip
(b) The most obvious next case is that $\mathfrak L_I$ consists of all possible distributions  of  random vectors of dimension $d$  with values in $\{0,1\}^d$. This situation corresponds to the case that we have no restriction or no prior knowledge on the distribution of $I$.

\medskip
(c) A case of potential interest is that $\mathfrak L_I$ consists of all possible distributions  of  random vectors  of dimension $d$  with values in $\{0,1\}^d$ and components that are  independent and identically distributed. Precisely, we have that $I=(I(1),\dots,I(d))^\top$, where $I(1),\dots,I(d)$ are independent, each with the same binomial (Bernoulli) distribution.

\medskip
(d) Another case of potential interest is that $\mathfrak L_I$ consists of all possible distributions  of  random vectors  of dimension $d$  with values in $\{0,1\}^d$ that describe a  monotone missing (drop-out) data meachanism. Precisely, we have that $I=(I(1),\dots,I(d))^\top$ with the property that
$$I(j)=0\Longrightarrow I(j)=\cdots=I(d)=0,~j=1,\dots,d.$$

\end{example}

We suppose the following assumption.

\begin{assumption}\label{ass1b}
We suppose that    the random vectors $X$ and $I$ are independent and that $p({\bf 1}_d)>0$, where ${\bf 1}_d=(1,\dots,1)\in\R^d$, such that  $X$ is potentially completely observable.
\end{assumption}

To formalize the incomplete data setting, we use the  Hadamard-product  $\odot$ as  an operation defined on any space of real matrices, e.g.,  
$$I\odot X=(I(1)X(1),\dots,I(d)X(d))^\top$$
is the  Hadamard-product of $I$ and $X$, that is a random vector with values in $\R^d$. We will use $^{\odot-1}$ as a notation for  the Hadamard-inverse or the entrywise inverse on  the respective space of real matrices. Let us suppose that we observe
$$(I_i,I_i\odot X_i),~i=1,\dots,n,$$
that is a sample of size $n\in\N$ of independent random variables on  the probability space, each with the same distribution as $(I,I\odot X)$. The second components can be regarded as our  sample of incomplete observations, while the first components encode which values are missing. Note that the special  case  $p({\bf 1}_d)=1$  leads to the usual complete data situation and is covered by our setting. Under Assumption \ref{ass1b}, it holds that
\begin{align*}
  \forall x\in\R^d:  P(X\le x)=\frac{P( I={\bf 1}_d,I\odot X\leq x)}{p({\bf 1}_d)},
\end{align*}
where $\le$ is meant component-wise here, which shows that   the distribution $\LL(I,I\odot X)$ uniquely determines the distribution  $\LL(X)$. This shows that it is sufficient to work with the observations $(I_i,I_i\odot X_i)$, $i=1,\dots,n$, for the treatment of the testing problem under Assumption \ref{ass1b}.

\section{Parameter estimation}\label{pe}

The underlying distribution of our sample $\LL(I,I\odot X)$ depends on the unknown expectation vector $\mu$ and covariance matrix $\Sigma$ of $X$ and on the unknown  probability mass function $p$ of $I$, in particular if the null hypothesis $H_0$ is valid. For that reason, estimators  of these unknown parameters are required. Let $\widehat{\mu}_n$ be an estimator of $\mu$ with values in  $\R^d$, let $\widehat{\Sigma}_n$ be an estimator of $\Sigma$ with values in the space of symmetric postive semidefinite matrices in $\R^{d\times d}$ and let $\widehat{p}_n$  be an estimator of $p$ with values in the space of probability mass functions on $\{0,1\}^d$. Here, we suppose that the estimators are given by appropriate measurable functions applied to the sample   $(I_i,I_i\odot X_i)$, $i=1,\dots,n$. Note that we allow singular covariance matrix estimates. The following assumption states desireable properties of the estimators.

\begin{assumption}\label{ass2b}
We suppose the consistency of the estimators 
$$\widehat{\mu}_n\overset{a.s.}{\longrightarrow}  \mu~\text{as}~n\to\infty$$ 
and
$$\widehat{\Sigma}_n\overset{a.s.}{\longrightarrow}   \Sigma~\text{as}~n\to\infty$$
as well as
$$\widehat{p}_n(a)\overset{a.s.}{\longrightarrow}   p(a)~\text{as}~n\to\infty$$
for all $a\in\{0,1\}^d$.
\end{assumption}

Additionally, the following  properties of  the estimators are required.

\begin{assumption}\label{ass3}
We suppose the existence of a measurable map $l_\mu:\R^d\times\R^d\rightarrow \R^d$ with $\E(|l_\mu(I,I\odot X)|^2)<\infty$,  $\E(l_\mu(I,I\odot X))=0$ and
\begin{equation*}
    \sqrt{n}(\widehat{\mu}_n-\mu)=\frac{1}{\sqrt{n}}\sum_{i=1}^n l_\mu(I_i,I_i\odot X_i)+\oP(1)~\text{as}~n\rightarrow\infty,
\end{equation*}
the existence of a measurable map $l_\Sigma:\R^d\times\R^d\rightarrow \R^{d\times d}$ with $\E(|l_\Sigma(I,I\odot X)|^2)<\infty$,  $\E(l_\Sigma(I,I\odot X))=0$ and  
\begin{equation*}
    \sqrt{n}(\widehat{\Sigma}_n-\Sigma)=\frac{1}{\sqrt{n}}\sum_{i=1}^n l_\Sigma(I_i,I_i\odot X_i)+\oP(1)~\text{as}~n\rightarrow\infty
\end{equation*}
and the existence of a measurable map $l_{p(a)}:\R^d\times\R^d\rightarrow [0,1]$ with $\E(|l_{p(a)}(I,I\odot X)|^2)<\infty$,  $\E(l_{p(a)}(I,I\odot X))=0$ and  
\begin{equation*}
    \sqrt{n}(\widehat{p}_n(a)-p(a))=\frac{1}{\sqrt{n}}\sum_{i=1}^n l_{p(a)}(I_i,I_i\odot X_i)+\oP(1)~\text{as}~n\rightarrow\infty
\end{equation*}
for all $a\in\{0,1\}^d$.
\end{assumption}

In what follows, we give examples of  estimators.

\begin{example}\label{exa1}
Let us introduce the following estimators.

\medskip
(a)  An estimator of the expectation vector $\mu$ of $X$  is given by 
\begin{equation*}
    \widehat{\mu}_n=\bigg(\sum_{i=1}^n I_i\odot X_i\bigg)\odot\bigg(\sum_{i=1}^nI_i\bigg)^{\odot-1}
\end{equation*}
with the conventions $1/0=\infty$ and $0\cdot\infty=0$. 

\medskip
(b) Introducing with 
\begin{equation*}
    \widehat{\Lambda}_n=\frac{1}{n}\sum_{i=1}^n(I_i\odot (X_i-\widehat{\mu}_n))(I_i\odot (X_i-\widehat{\mu}_n))^\top
\end{equation*}
a symmetric  matrix, with
\begin{equation*}
    \widehat{M}_n=\frac{1}{n}\sum_{i=1}^nI_iI_i^\top
\end{equation*}
a symmetric  matrix and with
\begin{equation*}
    \widehat{S}_n=\widehat{\Lambda}_n\odot\widehat{M}_n^{\odot-1},
\end{equation*}
a symmetric matrix, let
\begin{equation*}
    \widehat{\Sigma}_n=\sqrt{\widehat{S}_n\widehat{S}_n}
\end{equation*}
be the existing and uniquely determined postive semidefinite matrix root of the symmetric positiv semidefinite matrix $\widehat{S}_n\widehat{S}_n$. Then,  $  \widehat{\Sigma}_n$ is an estimator of  the covariance matrix $\Sigma$ of $X$.

\medskip
(c) Let us consider estimators of the probability mass function $p$ of $I$.  In the situation of Example \ref{exa0} (a), an estimator is given by 
\begin{equation*}
    \widehat{p}_n(a)=p(a),~a\in\{0,1\}^d.
\end{equation*}
In the situation of Example \ref{exa0} (b), an estimator is given by 
\begin{equation*}
    \widehat{p}_n(a)=\frac{1}{n}\sum_{i=1}^n\1(I_i=a),~a\in\{0,1\}^d,
\end{equation*}
where $\1$ denotes the indicator function. In the situation of  Example \ref{exa0} (c), an estimator is given by 
\begin{equation*}
    \widehat{p}_n(a)= \bigg(\frac{1}{n}\sum_{i=1}^n\frac{1}{d}\sum_{j=1}^d\1(I_i(j)=1)\bigg)^{\sum_{j=1}^da(j)}\bigg(1- \frac{1}{n}\sum_{i=1}^n\frac{1}{d}\sum_{j=1}^d\1(I_i(j)=1)\bigg)^{d-\sum_{j=1}^da(j)}
\end{equation*}
for $a\in\{0,1\}^d$. In the situation of Example \ref{exa0} (d), the same estimator as in the situation of Example \ref{exa0} (b) is suggested, noticing that in this case the estimator can be rewritten as
\begin{equation*}
    \widehat{p}_n(a) =\frac{1}{n}\sum_{i=1}^n\1(I_i(1)=\cdots=I_i(d)=1)
\end{equation*}
for  $a\in\{0,1\}^d$ with $a(1)=\cdots=a(d)=1$,  
\begin{equation*}
    \widehat{p}_n(a) =\frac{1}{n}\sum_{i=1}^n\1(I_i(1)=\cdots=I_i(d)=0)
\end{equation*}
for  $a\in\{0,1\}^d$ with $a(1)=\cdots=a(d)=0$,
\begin{equation*}
    \widehat{p}_n(a) =\frac{1}{n}\sum_{i=1}^n\1(I_i(1)=\cdots=I_i(j)=1,I_i(j+1)=\cdots=I_i(d)=0)
\end{equation*}
for  $a\in\{0,1\}^d$ with $a(1)=\cdots=a(j)=1$ , $a(j+1)=\cdots=I_i(d)=0$, $j=1,\dots,d-1$, and zero else.

\end{example}

In fact, these estimators satisfy the following properties.

\begin{theorem}\label{thm1}
Suppose Assumption \ref{ass1a} and Assumption \ref{ass1b}  are valid. Then, the estimators in Example \ref{exa1} satisfy  Assumption \ref{ass2b} and Assumption \ref{ass3}. Considering  translations of the form $x\mapsto x+c$ for $c\in\R^d$ applied to $X_1,\dots,X_n$, the estimator $\widehat{\mu}_n$ in Example \ref{exa1} (a) is translation equivariant and the estimators $\widehat{\Sigma}_n$ and $\widehat{p}_n$  in Example \ref{exa1} (b) and  Example \ref{exa1} (c) are translation invariant.
\end{theorem}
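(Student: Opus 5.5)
The plan is to reduce everything to the strong law of large numbers and a delta-method linearization for smooth functions of sample means of i.i.d. vectors built from $(I_i,I_i\odot X_i)$. Under Assumption \ref{ass1b}, $X$ and $I$ are independent. Hence $\E(I\odot X)=\pi\odot\mu$ with $\pi(j)=P(I(j)=1)\ge p({\bf 1}_d)>0$, and $\E(II^\top)=M$ with $M(j,k)=P(I(j)=I(k)=1)\ge p({\bf 1}_d)>0$. Similarly,
\[\E\big((I\odot(X-\mu))(I\odot(X-\mu))^\top\big)=\Sigma\odot M .\]
All these moments are finite by Assumption \ref{ass1a}.

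\textbf{Step 1: the mean estimator.} By the strong law, $n^{-1}\sum I_i\odot X_i\to\pi\odot\mu$ and $n^{-1}\sum I_i\to\pi$ almost surely. Since $\pi>0$ entrywise, the denominators are eventually positive a.s., so the conventions $1/0=\infty$, $0\cdot\infty=0$ are eventually irrelevant and $\widehat\mu_n\to\mu$ a.s. For Assumption \ref{ass3}, I write
\[\widehat\mu_n-\mu=\Big(\tfrac1n\sum I_i\odot(X_i-\mu)\Big)\odot\Big(\tfrac1n\sum I_i\Big)^{\odot-1}.\]
The CLT gives $\frac1{\sqrt n}\sum I_i\odot(X_i-\mu)=O_P(1)$, and the second factor converges to $\pi^{\odot-1}$. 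Slutsky then yields $l_\mu(a,y)=(y-a\odot\mu)\odot\pi^{\odot-1}$, which is centered and square integrable.

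\textbf{Step 2: the covariance estimator.} Expanding around $\mu$ gives
\[\widehat\Lambda_n=\tfrac1n\sum (I_i\odot(X_i-\mu))(I_i\odot(X_i-\mu))^\top-R_n,\]
where $R_n$ collects the cross terms, e.g. $\frac1n\sum (I_i\odot(X_i-\mu))(I_i\odot(\widehat\mu_n-\mu))^\top$. These are products of an $O_P(1)$ or $o(1)$ sample mean with $\widehat\mu_n-\mu=O_P(n^{-1/2})$, so $\sqrt n R_n=\oP(1)$ and $R_n\to0$ a.s. Moreover $\widehat M_n\to M$ a.s. with $M$ entrywise positive. Consequently $\widehat S_n\to\Sigma\odot M\odot M^{\odot-1}=\Sigma$ a.s. The linearization via the delta method for the entrywise quotient is
\[l_S(a,y)=\big((y-a\odot\mu)(y-a\odot\mu)^\top-\Sigma\odot aa^\top\big)\odot M^{\odot-1},\]
which uses $\Sigma\odot(aa^\top-M)$ for the denominator contribution. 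This requires $\E|X|^4<\infty$ for square integrability and for the CLT.

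\textbf{Main obstacle.} The main obstacle is passing from $\widehat S_n$ to $\widehat\Sigma_n=\sqrt{\widehat S_n^2}=|\widehat S_n|$. I will use that $A\mapsto\sqrt{A^2}$ is continuous on symmetric matrices and coincides with the identity on a neighbourhood of any positive definite matrix. Indeed, if $A$ is close to $\Sigma$ then $A$ is positive definite, so $\sqrt{A^2}=A$. Since $\widehat S_n\to\Sigma$ a.s., we get $\widehat\Sigma_n=\widehat S_n$ eventually a.s. This gives consistency, and also $\sqrt n(\widehat\Sigma_n-\widehat S_n)=0$ with probability tending to one, so $l_\Sigma=l_S$. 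The remaining worry is that Assumption \ref{ass1a} only gives second moments, whereas the linearization needs fourth moments. I would flag this and, if necessary, read the statement as implicitly requiring $\E|X|^4<\infty$ for the $\Sigma$-part of Assumption \ref{ass3}.

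\textbf{Step 3: the probability estimators.} In case (a) take $l_{p(a)}=0$. In cases (b) and (d) the estimator is an empirical frequency, so take $l_{p(a)}(b,y)=\1(b=a)-p(a)$. Strictly this takes values in $[-1,1]$ rather than $[0,1]$, a harmless point about the stated range. In case (c), $\widehat q_n=\frac1{nd}\sum_{i,j}I_i(j)\to q$ a.s. with influence function $\frac1d\sum_j b(j)-q$. The map $q\mapsto q^{k}(1-q)^{d-k}$ is smooth, so the delta method gives
\[l_{p(a)}=g'(q)\Big(\tfrac1d\textstyle\sum_j b(j)-q\Big).\]

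\textbf{Step 4: invariance.} Replacing $X_i$ by $X_i+c$ changes $I_i\odot X_i$ to $I_i\odot X_i+I_i\odot c$. Then $\widehat\mu_n$ becomes $\widehat\mu_n+c$ on coordinates with positive counts; on the rest it stays $\infty\cdot 0$-type, which needs the convention to be checked, and equivariance holds whenever all counts are positive. Since $I_i\odot(X_i+c-\widehat\mu_n-c)=I_i\odot(X_i-\widehat\mu_n)$, both $\widehat\Lambda_n$ and $\widehat\Sigma_n$ are unchanged. Finally, $\widehat p_n$ depends only on the $I_i$, so it is invariant.
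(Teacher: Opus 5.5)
Your proof is correct and shares the paper's skeleton: the strong law for consistency, CLT plus Slutsky for $\widehat\mu_n$ (with the same $l_\mu$), a delta-method linearization of $\widehat\Lambda_n\odot\widehat M_n^{\odot-1}$, and routine delta-method arguments for $\widehat p_n$. You handle the matrix root differently. The paper only asserts that $S\mapsto\sqrt{SS}$ is differentiable at $\Sigma$ and feeds $f\circ g$ into Lemma \ref{lem1}. You instead note that $\sqrt{A^2}=A$ for positive definite $A$, so $\widehat\Sigma_n=\widehat S_n$ eventually almost surely. Your route is more elementary, it justifies the differentiability the paper leaves unproved (the derivative at $\Sigma$ is the identity), and it gives the explicit $l_\Sigma=l_S$. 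The paper's route is more modular but never writes $l_\Sigma$ down.

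Your caveats are correct observations about the statement, not gaps in your proof:
\begin{itemize}
\item The paper's $l_\Lambda=(I\odot X)(I\odot X)^\top-\Lambda$ is square integrable only if $\E|X|^4<\infty$. Already for complete data, asymptotic linearity of $\widehat\Sigma_n$ with a square-integrable $l_\Sigma$ forces $\E(X(1)-\mu(1))^4<\infty$. So this hypothesis is genuinely missing from the statement, though it is harmless for Theorem \ref{thm3}, where $H_0$ gives all moments.
\item A $[0,1]$-valued $l_{p(a)}$ with mean zero would vanish almost surely, so its range must be read as $\R$.
\item Under the stated conventions, $\widehat\mu_n(j)=0$ for every $c$ whenever column $j$ is never observed. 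Hence equivariance of $\widehat\mu_n$ holds only when all counts are positive.
\end{itemize}

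Two small completions are needed. First, close your open remark on invariance: invariance of $\widehat\Lambda_n$ and $\widehat\Sigma_n$ holds unconditionally, because on such a coordinate $I_i(j)=0$ for all $i$ annihilates the entry regardless of $\widehat\mu_n(j)$. Second, in Step 2 say explicitly that the cross-term averages $\frac1n\sum_i I_i(j)I_i(k)(X_i(j)-\mu(j))$ tend to $0$ almost surely, by independence of $I$ and $X$. Being merely bounded is not enough; it is this convergence that makes them $\oP(n^{-1/2})$ after multiplication by $\widehat\mu_n-\mu$.
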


Using  the estimators in Example \ref{exa1}, Theorem \ref{thm1} motivates to focus on a testing procedure that depends only on the estimators $\widehat{\Sigma}_n$ and  $\widehat{p}_n$ and on the empirically centered sample  
\begin{equation*}
(I_i,  I_i\odot(X_i-\widehat{\mu}_n)),~i=1,\dots,n,
\end{equation*}
noticing that these random variables can be obtained  from our original sample  $(I_i, I_i\odot X_i)$, $i=1,\dots,n$, due to
\begin{equation*}
 I_i\odot(X_i-\widehat{\mu}_n)= I_i\odot X_i - I_i\odot \widehat{\mu}_n,~i=1,\dots,n.
\end{equation*}
This results in  a testing procedure that  is invariant under translations of the form $x\mapsto x+c$ for $c\in\R^d$ applied to $X_1,\dots,X_n$ and in particular independent of the expectation vector $\mu$.

\section{Test statistic}\label{es}

 For more power and  flexibility in applications, our test incorporates a projection approach. For this purpose,  we intruduce with  $\Pi=\{\pi:\R^d\rightarrow\R^{d_\pi};\pi(x)= (x(i_1),\dots,x(i_{d_\pi}))^\top,x=(x(1),\dots,x(d))^\top\in\R^d, 1\leq i_1<\dots<i_{d_\pi}\leq d,1\leq d_\pi\leq d\}$, the set of projections of $\R^d$ onto $\R^{d_\pi}$, $1\leq d_\pi\leq d$. We adopt the application of  $\pi\in\Pi$ to occuring probability mass functions and matrices, e.g.,  $\pi(p)$ denotes the probability mass function of $\pi(I)$ and   $\pi(\Sigma)$ denotes the covariance matrix of $\pi(X)$.   Let us fix some  $\pi\in\Pi $ with $\pi:\R^d\rightarrow\R^{d_\pi}$ for a moment.  For $a\in\{0,1\}^{d_\pi}$,  we write  $D_a=\operatorname{diag}(a)\in\R^{d_\pi\times d_\pi}$ for the diagonal matrix with  diagonal entries $a$. Moreover, we write  $\Phi_{d_\pi}$ for the  multivariate standard normal distribution of dimension $d_\pi$.   Let 
\begin{align*}
\varphi_{\pi}(t)=\E\Big(\exp\big(i\scalar{t}{\pi(I\odot (X-{\mu}))}\big)\Big),~t\in \R^{d_\pi},
\end{align*}
be the characteristic function of the centered random vector $\pi(I\odot (X-{\mu}))$. Under the null hypothesis $H_0$, this characteristic function  is given by
\begin{align*}
    \phi_{\pi}(t)
    =\sum_{a\in\{0,1\}^{d_\pi}}\exp\left(-\frac{1}{2}\scalar{D_a \pi(\Sigma) D_a t}{t} \right) \pi(p)(a),~t\in\R^{d_\pi}.
\end{align*}
For that reason, it is $ \varphi_{\pi}=\phi_{\pi}$ under the null hypothesis $H_0$. Estimators are given by
\begin{equation*}
  \widehat  \varphi_{n,\pi}(t)=\frac{1}{n}\sum_{k=1}^n\exp\big(i\scalar{t}{\pi(I_k\odot (X_k-{\widehat\mu_n}))}\big),~t\in \R^{d_\pi},
\end{equation*}
that is the empirical characteristic function of the  empirically centered random vectors  $\pi(I_i\odot (X_i-\widehat\mu_n))$, $i=1,\dots,n$, and by
\begin{align*}
   \widehat \phi_{n,\pi}(t)
    =\sum_{a\in\{0,1\}^{d_\pi}}\exp\left(-\frac{1}{2}\scalar{D_a \pi(\widehat\Sigma_n) D_a t}{t} \right) \pi(\widehat p_n)(a),~t\in\R^{d_\pi}.
\end{align*}
Motivated by the usual BHEP test, we define a distance
\begin{equation*}
    T_{n,\pi}=n\int|  \widehat  \varphi_{n,\pi}-  \widehat \phi_{n,\pi}|^2\d\Phi_{d_\pi}.
\end{equation*}
A closed-form formula of $  T_{n,\pi}$ is available,  useful, e.g., for  implementation purposes. It is
\begin{align*}
      T_{n,\pi}=&\frac{1}{n}\sum_{i=1}^n\sum_{k =1}^n\exp\bigg(-\frac{1}{2}|\pi(I_i\odot (X_i-\widehat \mu_n))-\pi(I_k\odot (X_k-\widehat \mu_n))|^2\bigg)\\
    &-2\sum_{i=1}^n\sum_{a\in\{0,1\}^{d_\pi}}\E\bigg(\exp\Big(-\frac{1}{2}| W_{\pi,i,a}|^2\Big)\bigg)\pi(\widehat{p}_{n})(a)\\
    &+n\sum_{a\in\{0,1\}^{d_\pi}}\sum_{b\in\{0,1\}^{d_\pi}}\E\bigg(\exp\Big(-\frac 1 2 | Z_{\pi,a,b}|^2\Big)\bigg)\pi(\widehat{p}_{n})(a)\pi(\widehat{p}_{n})(b),
\end{align*}
where for $i=1,\dots,n$ and $a,b\in\{0,1\}^{d_\pi}$, the  random vectors in the formula satisfy $W_{\pi,i,a}\sim N_{d_\pi}(\pi(I_i\odot(X_i-\widehat{\mu}_n)),{D_a\pi(\widehat \Sigma_n) D_a})$ and $Z_{\pi,a,b}\sim N_{d_\pi}(0,{(D_a+D_{b})\pi(\widehat \Sigma_n) (D_a+D_{b})})$. Note that these random variables are defined on another probability space and related distributions are calculated as if the  sample $(I_i,I_i\odot X_i)$, $i=1,\dots,n$, is constant. Moreover, the expectations in the formula are the moment generating functions of  generalized  chi-squared distributions that can be obtained explicitly as it is explained in  \citet{DasGeisler-qChi2-param-conversion}. Introducing  
\begin{align*}
          V_{n,\pi}(t)=&\frac{1}{\sqrt{n}}\sum_{i=1}^n\big(\cos\scalar{t}{\pi(I_i\odot(X_i-\widehat{\mu}_n))} + \sin\scalar{t}{\pi(I_i\odot(X_i-\widehat{\mu}_n))}\big)\\
        &- \sqrt{n}\sum_{a\in\{0,1\}^{d_\pi}}\exp\bigg(-\frac{1}{2}\scalar{D_a\pi(\widehat{\Sigma}_n)D_at}{t}\bigg)\pi(\widehat{p}_n)(a),~t\in \R^{d_\pi},
\end{align*}
another alternative expression of  $T_{n,\pi}$ is obtained by 
\begin{equation*}
   T_{n,\pi}=\int V_{n,\pi}^2\d \Phi_{d_\pi}.
\end{equation*}
This expression can be used, e.g., for mathematical analysis or evaluation of $T_{n,\pi}$ by Monte-Carlo simulation. Finally, a BHEP test statistic is given by 
$$T_n=\sum_{\pi\in \Pi}w_{\pi}  T_{n,\pi},$$
where $w_{\pi}$, $\pi\in \Pi$, are weights that satisfy $w_{\pi}\in[0,1]$ for all $\pi\in\Pi$,  $\sum_{\pi\in\Pi}w_{\pi}=1$ and $w_{\rm{id}}>0$ with $\rm{id}\in\Pi$ as the identity map on  $\R^d$. Note that  singular covariance matrix estimates are allowed in the test statistic. The test  statistic  depends only on the estimators $\widehat{\Sigma}_n$ and  $\widehat{p}_n$ and on the empirically centered sample $(I_i,  I_i\odot(X_i-\widehat{\mu}_n))$, $i=1,\dots,n$. Using  the estimators in Example \ref{exa1}, the test statistic  is invariant under translations of the form $x\mapsto x+c$ for $c\in\R^d$ applied to $X_1,\dots,X_n$ and in particular independent of the expectation vector $\mu$.

\section{Asymptotic results}\label{as}

We present asymptotic results  as the sample size tends to infinity. At first, we establish an almost sure limit of the test statistic, available under the null hypothesis   $H_0$ and under the alternative $H_1$.

\begin{theorem}\label{thm2}
Suppose Assumption \ref{ass1a}, Assumption \ref{ass1b} and  Assumption \ref{ass2b} are valid. Then, we have
\begin{align*}
 \frac{1}{n}T_n\overset{a.s.}{\longrightarrow}  \kappa~\text{as}~n\to\infty,
\end{align*}
where
\begin{align*}
\kappa=\sum_{\pi\in \Pi}w_\pi\int|    \varphi_{\pi}-   \phi_{\pi}|^2\d\Phi_{d_\pi}.
\end{align*}
It is  $\kappa\in[0,\infty)$, where $\kappa= 0$ under the null hypothesis $H_0$ and  $\kappa>0$ under the alternative $H_1$.
\end{theorem}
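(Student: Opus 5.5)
Since $\Pi$ is finite and $T_n=\sum_{\pi}w_\pi T_{n,\pi}$, it suffices to show, for each fixed $\pi\in\Pi$, that
\[
\frac1n T_{n,\pi}=\int|\widehat\varphi_{n,\pi}-\widehat\phi_{n,\pi}|^2\d\Phi_{d_\pi}\overset{a.s.}{\longrightarrow}\int|\varphi_\pi-\phi_\pi|^2\d\Phi_{d_\pi}.
\]
The plan is to prove pointwise almost sure convergence $\widehat\varphi_{n,\pi}(t)\to\varphi_\pi(t)$ and $\widehat\phi_{n,\pi}(t)\to\phi_\pi(t)$ for every $t$ on a common event of probability one. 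Because all four functions are bounded in modulus by $1$, the integrand is bounded by $4$. Dominated convergence with respect to the probability measure $\Phi_{d_\pi}$, applied pathwise, then gives the claim.

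\textbf{Convergence of $\widehat\phi_{n,\pi}$.} This is immediate from Assumption \ref{ass2b}. The map $(\Sigma',p')\mapsto\sum_a\exp(-\frac12\scalar{D_a\pi(\Sigma')D_at}{t})\pi(p')(a)$ is continuous. Moreover, $\pi(\widehat p_n)(a)$ is a finite sum of the values $\widehat p_n(b)$. Hence the convergence holds on the event where $\widehat\Sigma_n\to\Sigma$ and $\widehat p_n\to p$, and this event does not depend on $t$.

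\textbf{Convergence of $\widehat\varphi_{n,\pi}$.} This is the main step, and I would treat it uniformly in $t$ rather than pointwise, to avoid uncountably many null sets. First, replace $\widehat\mu_n$ by $\mu$. Using $|e^{ix}-e^{iy}|\le|x-y|$,
\[
\Big|\widehat\varphi_{n,\pi}(t)-\frac1n\sum_{k=1}^n\exp\big(i\scalar{t}{\pi(I_k\odot(X_k-\mu))}\big)\Big|\le |t|\,|\widehat\mu_n-\mu|,
\]
since $|I_k\odot(\mu-\widehat\mu_n)|\le|\mu-\widehat\mu_n|$. This bound tends to $0$ almost surely, uniformly on compact sets of $t$.

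Next, I would show that the empirical characteristic function of the i.i.d.\ vectors $\pi(I_k\odot(X_k-\mu))$ converges almost surely to $\varphi_\pi$ uniformly on compact sets. This is the classical result of Csörgő / Feuerverger–Mureika. Alternatively, it can be proved directly:
\begin{itemize}
\item apply the strong law of large numbers on a countable dense set of $t$;
\item use equicontinuity, since $|e^{i\scalar{t}{Y}}-e^{i\scalar{s}{Y}}|\le|t-s|\,|Y|$ and $\frac1n\sum_k|Y_k|\to\E|Y|<\infty$ almost surely by Assumption \ref{ass1a}.
\end{itemize}
This gives a single probability-one event on which $\widehat\varphi_{n,\pi}\to\varphi_\pi$ for all $t$. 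Intersecting over the finitely many $\pi$ and combining with the previous step yields $\frac1nT_n\to\kappa$ almost surely.

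\textbf{Properties of $\kappa$.} The bound $0\le\kappa\le4$ is clear. Under $H_0$, the paper notes $\varphi_\pi=\phi_\pi$, so $\kappa=0$.

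Under $H_1$, I would use $w_{\rm id}>0$, so it suffices to show $\varphi_{\rm id}\ne\phi_{\rm id}$ on a set of positive Gaussian measure. Both functions are continuous and $\Phi_d$ has full support, so it is enough to show $\varphi_{\rm id}\neq\phi_{\rm id}$ as functions. Suppose instead they are equal. Let $Y\sim\mathcal N_d(0,\Sigma)$ be independent of $I$. Then $I\odot(X-\mu)$ and $I\odot Y$ have the same distribution, since $\phi_{\rm id}$ is exactly the characteristic function of $I\odot Y$, a mixture over $a$.

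Now condition on the event $\{I={\bf 1}_d\}$, using independence (Assumption \ref{ass1b}) and $p({\bf 1}_d)>0$. This is the main obstacle: the event $\{I={\bf 1}_d\}$ is not determined by the value of $I\odot(X-\mu)$ alone, so equality of these laws does not obviously transfer to the conditional laws. Two routes are available:
\begin{itemize}
\item Interpret the construction as the joint law of $(I,I\odot(X-\mu))$. Using the identity $P(X\le x)=P(I={\bf 1}_d,I\odot X\le x)/p({\bf 1}_d)$ from Section \ref{tp}, one obtains $\mathcal L(X-\mu)=\mathcal N_d(0,\Sigma)$, contradicting $H_1$.
\item Compare the mixtures directly. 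Observe that $\phi_{\rm id}(t)-p({\bf 1}_d)e^{-\frac12 t^\top\Sigma t}$ and $\varphi_{\rm id}(t)-p({\bf 1}_d)\E e^{i\scalar{t}{X-\mu}}$ coincide, because the terms for $a\ne{\bf 1}_d$ involve only the lower-dimensional marginals of $X-\mu$.
\end{itemize}
The second route requires the lower-dimensional marginals to be Gaussian. I would establish this by an induction over the number of ones in $a$, starting from the smallest nonempty patterns. This step needs the most care, and I would first try to make the induction work via the projections, or else fall back on the first route.
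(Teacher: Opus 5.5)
Your almost sure convergence argument is the paper's. It uses the same four ingredients: the Lipschitz bound removing $\widehat\mu_n$ (your $|t|\,|\widehat\mu_n-\mu|$ is a slightly cleaner form of the paper's Taylor bound), continuity for $\widehat\phi_{n,\pi}$, the strong law on a countable dense set plus the equicontinuity bound $|s-t|\frac1n\sum_k|\pi(I_k\odot(X_k-\mu))|$, and dominated convergence with the bound $4$. The $H_0$ case and the reduction of $\kappa>0$ to $\varphi_{\mathrm{id}}\neq\phi_{\mathrm{id}}$ via $w_{\mathrm{id}}>0$ and continuity also match.

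What is not yet a proof is the identifiability step under $H_1$, which you leave open with route 1 as a fallback. Route 1 is exactly how the paper concludes: it only recalls that $\LL(I,I\odot X)$ determines $\LL(X)$. Your objection is correct. $\varphi_{\mathrm{id}}=\phi_{\mathrm{id}}$ gives equality of the laws of $I\odot(X-\mu)$ and $I\odot Y$, not of the pairs with $I$, so route 1 does not close the step.

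Route 2 does close it, and the induction goes through. Put $Z=X-\mu$. Evaluating at $t$ vanishing off the coordinates kept by $\pi\in\Pi$ shows $\varphi_\pi=\phi_\pi$ for every $\pi$. By independence of $I$ and $X$ (Assumption \ref{ass1b}) this reads
\begin{equation*}
\sum_{b\in\{0,1\}^{d_\pi}}\pi(p)(b)\,\E\big(e^{i\scalar{s}{D_b\pi(Z)}}\big)=\sum_{b\in\{0,1\}^{d_\pi}}\pi(p)(b)\,\E\big(e^{i\scalar{s}{D_b\pi(Y)}}\big),\quad s\in\R^{d_\pi}.
\end{equation*}
For $b=0$ both summands equal $1$. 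For $b\neq 0$, $b\neq{\bf 1}_{d_\pi}$, they depend only on the laws of $\tau(Z)$ and $\tau(Y)$, where $\tau\in\Pi$ projects onto the coordinates with $b=1$ and $d_\tau<d_\pi$. By induction on $d_\pi$ these summands cancel. Since $\pi(p)({\bf 1}_{d_\pi})\ge p({\bf 1}_d)>0$, you obtain $\LL(\pi(Z))=\LL(\pi(Y))$. For $\pi=\mathrm{id}$ this gives $X\sim\mathcal N_d(\mu,\Sigma)$ with $\Sigma$ positive definite, contradicting $H_1$.

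So Gaussianity of the lower-dimensional marginals is an output of the induction, not an extra input. What makes it work is that the same $p$ enters both sides.

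A one-step alternative: on $U=(\R\setminus\{0\})^d$ every term with $a\neq{\bf 1}_d$ vanishes. Hence the restrictions to $U$ are $p({\bf 1}_d)\LL(Z)|_U$ and $p({\bf 1}_d)\mathcal N_d(0,\Sigma)|_U$. The latter has total mass $p({\bf 1}_d)$ because $\Sigma(j,j)>0$ for all $j$. Equality therefore forces $P(Z\in U)=1$, and then $\LL(Z)=\mathcal N_d(0,\Sigma)$.

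With either completion your proof is correct, and at this step it is more careful than the paper's.
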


Now, we state the limiting null distribution of the test statistic. Remembering the previous definition of $V_{n,\pi}$, we deal with a stochastic process  with values in the separable Hilbert space  $H_\pi=L^2(\R^{d_\pi},\BB^{d_\pi},\Phi_{d_\pi})$  consisting  of $(\BB^{d_\pi},\BB)$-measurable functions $f:\R^{d_\pi}\rightarrow \R$ that are square-integrable  with respect to $\Phi_{d_\pi}$ for all  $\pi\in\Pi$. For all  $\pi\in\Pi$,   we can alternatively deal with $ V_{n,\pi}$ as a stochastic process with values in the separable Hilbert space  $H_\Pi=L^2(\bigtimes_{\pi\in\Pi}\R^{d_\pi},\bigotimes_{\pi\in\Pi}\BB^{d_\pi},\bigotimes_{\pi\in\Pi}\Phi_{d_\pi})$ consisting  of $(\bigotimes_{\pi\in\Pi}\BB^{d_\pi},\BB)$-measurable functions $f:\bigtimes_{\pi\in\Pi}\R^{d_\pi}\rightarrow \R$ that are square-integrable  with respect to $\bigotimes_{\pi\in\Pi}\Phi_{d_\pi}$. Moreover, $(V_{n,\pi};\pi\in \Pi)$ can be regarded as a vector-valued stochastic process with values in the separable Hilbert space 
\begin{align*}
H=L^2\bigg(\bigtimes\limits_{\pi\in\Pi}\R^{d_\pi},\bigotimes\limits_{\pi\in\Pi}\BB^{d_\pi},\bigotimes\limits_{\pi\in\Pi}\Phi_{d_\pi}\bigg)^\Pi.
\end{align*}
This space is equipped with the scalar product $\scalar{f}{g}_{H}=\int \scalar{f}{g}\d\bigotimes_{\pi\in\Pi}\Phi_{d_\pi}$, $f,g\in H$, and with the norm $|f|_{H}=\sqrt{\scalar{f}{f}_{H}}=\sqrt{\int |f|^2 \d\bigotimes_{\pi\in\Pi}\Phi_{d_\pi}}$, $f\in H$. The test statistic can be rewritten as
\begin{equation*}
    T_n=\int \sum_{\pi\in \Pi}w_\pi V_{n,\pi}^2\d\bigotimes\limits_{\pi\in\Pi}\Phi_{d_\pi}.
\end{equation*}

\begin{theorem}\label{thm3}
Assume that the null hypothesis $H_0$ is valid. Suppose Assumption \ref{ass1a}, Assumption \ref{ass1b}, Assumption \ref{ass2b}  and  Assumption \ref{ass3} are valid. Then, we have
\begin{align*}
T_n\overset{d}{\longrightarrow} T~\text{as}~n\to\infty
\end{align*}
with a real-valued random variable
\begin{align*}
T=\int \sum_{\pi\in \Pi}w_\pi V_{\pi}^2\d\bigotimes\limits_{\pi\in\Pi}\Phi_{d_\pi},
\end{align*}
where $(V_{\pi};\pi\in \Pi)$ is a   vector-valued Gaussian process with values in $H$ and with expectation function given by
$$\E(V_{\pi}(t))=\E(\Psi(I,I\odot X,t,\pi))=0,~t\in\R^{d_\pi},~\pi\in \Pi,$$
and covariance function given by
$$\Cov(V_{\pi}(t),V_{\tau}(s))=\E(\Psi(I,I\odot X,t,\pi) \Psi(I,I\odot X,s,\tau)),~t\in\R^{d_\pi},~s\in\R^{d_\tau},~\pi,\tau\in \Pi,$$
\end{theorem}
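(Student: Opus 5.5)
The plan is to show that the vector process $(V_{n,\pi};\pi\in\Pi)$ is asymptotically equivalent in $H$ to a normalized sum of i.i.d. centered $H$-valued random elements
$$\frac{1}{\sqrt n}\sum_{i=1}^n\big(\Psi(I_i,I_i\odot X_i,\cdot,\pi);\pi\in\Pi\big),$$
and then to apply the central limit theorem in the separable Hilbert space $H$ together with the continuous mapping theorem. The map $f\mapsto\int\sum_\pi w_\pi f_\pi^2\,\d\bigotimes_{\pi}\Phi_{d_\pi}$ is continuous on $H$, so $T_n\overset{d}{\longrightarrow}T$ follows once $(V_{n,\pi})_\pi\overset{d}{\longrightarrow}(V_\pi)_\pi$ in $H$. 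The kernel $\Psi$ is identified in the first step below.

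\textbf{Step 1: linearization.} Fix $\pi$ and write $g(y,t)=\cos\scalar{t}{y}+\sin\scalar{t}{y}$. Under $H_0$, $\E g(\pi(I\odot(X-\mu)),t)=\phi_\pi(t)$, because $\phi_\pi$ is real and symmetric, so the sine part has mean zero.

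First, replace $\widehat\mu_n$ by $\mu$ in the empirical part using a first-order Taylor expansion in $\mu$:
$$\frac{1}{\sqrt n}\sum_i g\big(\pi(I_i\odot(X_i-\widehat\mu_n)),t\big)=\frac{1}{\sqrt n}\sum_i g\big(\pi(I_i\odot(X_i-\mu)),t\big)-\E\Big(\nabla_y g\big(\pi(I\odot(X-\mu)),t\big)^\top \pi(D_I)\Big)\pi\big(\sqrt n(\widehat\mu_n-\mu)\big)+R_{n}(t).$$
The remainder is controlled by $|t|^2|\widehat\mu_n-\mu|^2\sqrt n$ plus the fluctuation of the empirical mean of the derivative around its expectation. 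Both are $\oP(1)$ in $H_\pi$-norm, since $\int|t|^4\d\Phi<\infty$, by the strong law and Assumption~\ref{ass3}. Under $H_0$ the expectation term vanishes: $\E\big(\cos\scalar{t}{Y}\,Y\big)=0$ for $Y$ a symmetric mixture of centered Gaussians, and the corresponding $\sin$-derivative term is a multiple of $\E(\sin\scalar{t}{Y}\,\cdot)$, which also vanishes by symmetry. I would verify this carefully, and keep the term in $\Psi$ if it does not vanish.

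Second, expand the parametric part:
$$\sqrt n\big(\widehat\phi_{n,\pi}(t)-\phi_\pi(t)\big)=\sum_a e^{-\frac12\scalar{D_a\pi(\Sigma)D_at}{t}}\Big(-\tfrac12\scalar{D_a\pi(\sqrt n(\widehat\Sigma_n-\Sigma))D_at}{t}\,\pi(p)(a)+\sqrt n\big(\pi(\widehat p_n)(a)-\pi(p)(a)\big)\Big)+\oP(1).$$
This again holds in $H_\pi$-norm, via the mean value theorem with bounds polynomial in $|t|$ and the a.s. consistency in Assumption~\ref{ass2b}. Inserting the expansions from Assumption~\ref{ass3} yields
$$\Psi(I,I\odot X,t,\pi)=g\big(\pi(I\odot(X-\mu)),t\big)-\phi_\pi(t)+\sum_a e^{-\frac12\scalar{D_a\pi(\Sigma)D_at}{t}}\Big(\tfrac12\scalar{D_a\pi(l_\Sigma)D_at}{t}\,\pi(p)(a)-l_{\pi(p)(a)}\Big),$$
possibly with an $l_\mu$ term, where $l_{\pi(p)(a)}=\sum_{b:\pi(b)=a}l_{p(b)}$.

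\textbf{Step 2: Hilbert space CLT.} The summands $\big(\Psi(I_i,I_i\odot X_i,\cdot,\pi)\big)_\pi$ are i.i.d. and centered. They have finite second moment in $H$ because $|g|\le 2$, $\int|t|^4\d\Phi<\infty$, and the $l$'s are square integrable. The CLT in separable Hilbert spaces therefore gives convergence to a centered Gaussian element with the stated covariance function. Combining this with Slutsky's lemma in $H$ and continuous mapping finishes the proof.

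The main obstacle is making the $\oP(1)$ remainders uniform in $H$-norm rather than pointwise. In particular, the Taylor remainder in $\widehat\mu_n$ involves random $I_i$-masks and requires a uniform law of large numbers for $t\mapsto\frac1n\sum_i\nabla g(\cdot,t)$ weighted by $\Phi$. I would handle this by bounding the squared $H$-norm of the remainder by $\frac1n\sum_i|X_i-\mu|^2$-type quantities times $\int|t|^4\d\Phi$, and then using the strong law.
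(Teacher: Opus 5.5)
You follow the paper's route: Taylor-expand the empirical and parametric parts of $V_{n,\pi}$, reduce to $\frac{1}{\sqrt n}\sum_{i=1}^n\Psi(I_i,I_i\odot X_i,\cdot,\pi)$ plus an $H$-negligible remainder, then apply the Hilbert-space CLT and continuous mapping. The genuine gap is the step where you drop the first-order term in $\widehat\mu_n$. Write $Y=\pi(I\odot(X-\mu))$. Since $\nabla_y g(y,t)=(\cos\scalar{t}{y}-\sin\scalar{t}{y})\,t$, that term is $-\scalar{\E((\cos\scalar{t}{Y}-\sin\scalar{t}{Y})D_{\pi(I)})t}{\pi(\sqrt n(\widehat\mu_n-\mu))}$. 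The trigonometric factor multiplies the deterministic $t$ and the mask $\pi(I)$, not $Y$, so $\E(\cos\scalar{t}{Y}\,Y)=0$ is beside the point. Under $H_0$, $Y$ is centered Gaussian conditionally on $I$, so the sine part does vanish, but
\begin{equation*}
\E\big(\cos\scalar{t}{Y}\,\pi(I)\big)=\sum_{a\in\{0,1\}^{d_\pi}}\exp\Big(-\frac{1}{2}\scalar{D_a\pi(\Sigma)D_a t}{t}\Big)\pi(p)(a)\,a,
\end{equation*}
and every entry of this vector is at least $\exp(-\frac12\scalar{\pi(\Sigma)t}{t})\,p(\mathbf{1}_d)>0$. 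Hence $\Psi$ must contain the term
\begin{equation*}
-\scalar{t}{\pi\big(\E(\cos\scalar{t}{\pi(I\odot(X-\mu))}\,I)\odot l_\mu(I,I\odot X)\big)},
\end{equation*}
and omitting it changes the covariance of the limit process. As a check, take complete data, $\pi=\mathrm{id}$, $\widehat\mu_n$ the sample mean and $\Sigma$ known. This term then produces exactly the summand $-\scalar{\Sigma s}{t}\exp(-\frac12(\scalar{\Sigma t}{t}+\scalar{\Sigma s}{s}))$ of the classical BHEP kernel. Without it you get the known-mean kernel $\exp(-\frac12\scalar{\Sigma(t-s)}{t-s})-\exp(-\frac12(\scalar{\Sigma t}{t}+\scalar{\Sigma s}{s}))$.

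The paper's proof makes the mirror-image slip. It declares the cosine-derivative term $\oP(1)$ under $H_0$ and routes the $l_\mu$ contribution through $\E(\sin\scalar{t}{\pi(I\odot(X-\mu))}I)$, which is identically zero under $H_0$; its displayed identities for $\E\cos$ and $\E\sin$ are also interchanged. So your omission agrees with the printed $\Psi$, but neither gives the correct limit.

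Your signs $+\frac12\scalar{D_a\pi(l_\Sigma)D_at}{t}$ and $-l_{\pi(p)(a)}$ are the right ones. The paper's expansion of $\exp(-\frac12\scalar{D_a\pi(\widehat\Sigma_n)D_at}{t})\pi(\widehat p_n)(a)$ reverses both first-order signs, so the printed $\Psi$ has them flipped.

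Your remainder plan works with one adjustment. The centered fluctuation $\frac1n\sum_{i}(\cos\scalar{t}{Y_i}-\sin\scalar{t}{Y_i})D_{\pi(I_i)}t$ minus its mean is not controlled by a pathwise bound of $\frac1n\sum_i|X_i-\mu|^2$ type. Instead, its expected squared $L^2(\Phi_{d_\pi})$-norm is at most $2d_\pi/n$, so multiplied by $\sqrt n|\widehat\mu_n-\mu|$, which is bounded in probability, it is negligible.
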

with
\begin{align*}
\Psi(I,I\odot X,t,\pi)=& \cos\scalar{t}{\pi(I\odot(X-\mu))}\\
&+ \sin\scalar{t}{\pi(I\odot(X-\mu))}  \\
&-\sum_{a\in\{0,1\}^{d_\pi}}\exp\Big(-\frac{1}{2}\scalar{D_a\pi(\Sigma)D_a t}{t}\Big)\pi(p)(a)\\
&+\scalar{t}{\pi(\E(\sin\scalar{t}{\pi(I\odot(X-\mu)} I)\odot  l_\mu(I,I\odot X)}\\
 &-\sum_{a\in\{0,1\}^{d_\pi}}\frac{1}{2}\scalar{D_a\pi(l_\Sigma(I,I\odot X))D_a t}{t}\exp\Big(-\frac{1}{2}\scalar{D_a\pi(\Sigma)D_a t}{t}\Big)\pi(p)(a)\\
&+\sum_{a\in\{0,1\}^{d_\pi}}\pi(l_{p(a)}(I,I\odot X))
\exp\Big(-\frac{1}{2}\scalar{D_a\pi(\Sigma)D_a t}{t}\Big),~t\in\R^{d_\pi},~\pi\in \Pi.
\end{align*}

\section{Bootstrap procedure}\label{bp}

Given that $\alpha\in(0,1)$ is the significance level, denote by $c_{n,1-\alpha}$ a quantile of the distribution of the test statistic $\LL(T_n)$ under the null hypothesis $H_0$. Then, it is $P(T_n>c_{n,1-\alpha})\le \alpha$ under the null hypothesis  $H_0$. Supposing   Assumption \ref{ass1a}, Assumption \ref{ass1b}, Assumption \ref{ass2b} and  Assumption \ref{ass3} are valid, and  using  the same arguments as in (the proof of) Theorem 3 in \citet{GaigallWuebbolding_gBM-Test}, we can combine Theorem \ref{thm2} and Theorem \ref{thm3} to obtain  $P(T_n>c_{n,1-\alpha})\rightarrow 1$ as $n\to \infty$ under  the alternative $H_1$. These results  motivate to use $c_{n,1-\alpha}$ as critical value and to reject the null hypothesis $H_0$ if and only if $T_n>c_{n,1-\alpha}$. Unfortunately,  the distribution of the test statistic $\LL(T_n)$ under the null hypothesis $H_0$ depends on the  unknown underlying parameters $\mu$, $\Sigma$ and $p$ in general and the same applies to the related quantile $c_{n,1-\alpha}$ . For that reason, $c_{n,1-\alpha}$ is not available as critical value of a test in applications. To resolve this problem, a bootstrap procedure is suggested. For this purpse, let $X^*$ be a random vector of dimension $d$  with values in $\R^d$ and distribution $\mathcal N_d(\widehat \mu_n,\widehat \Sigma_n)$, let $I^*$ be a random vector of dimension $d$ with values in $\{0,1\}^d$ and probability mass function $\widehat p_n$ and let $X^*$ and $I^*$ be independent. Moreover, let 
$$(I_i^*,I_i^*\odot X_i^*),~i=1,\dots,n,$$
be a bootstrap sample of size $n\in\N$ of independent random variables, each with the same distribution as $(I^*,I^*\odot X^*)$. Note that these random variables are defined on another probability space and related distributions are calculated as if the  sample $(I_i,I_i\odot X_i)$, $i=1,\dots,n$, is constant.  Let $\widehat \mu_n^*$,  $\widehat \Sigma_n^*$ and  $\widehat p_n^*$ be bootstrap estimators, obtained now by the application of the respective functions to the bootstrap sample $(I_i^*,I_i^*\odot X_i^*)$, $i=1,\dots,n$. Letting
\begin{equation*}
  \widehat  \varphi_{n,\pi}^*(t)=\frac{1}{n}\sum_{k=1}^n\exp\big(i\scalar{t}{\pi(I_k^*\odot (X_k^*-{\widehat\mu_n^*}))}\big),~t\in \R^{d_\pi},
\end{equation*}
and
\begin{align*}
   \widehat \phi_{n,\pi}^*(t)
    =\sum_{a\in\{0,1\}^{d_\pi}}\exp\left(-\frac{1}{2}\scalar{D_a \pi(\widehat\Sigma_n^*) D_a t}{t} \right) \pi(\widehat p_n^*)(a),~t\in\R^{d_\pi},
\end{align*}
be the bootstrap versions of $\varphi_{n,\pi}$ and $   \phi_{n,\pi}$, we define by
\begin{equation*}
    T_{n,\pi}^*=n\int|  \widehat  \varphi_{n,\pi}^*-  \widehat \phi_{n,\pi}^*|^2\d\Phi_{d_\pi}
\end{equation*}
the related  distance and obtain the bootstrap test statistic as
\begin{equation*}
T_n^*=\sum_{\pi\in \Pi}w_{\pi}  T_{n,\pi}^*.
\end{equation*}
Finally, a bootstrap quantile of order $1-\alpha$ of the distribution of the bootstrap test statistic $\LL(T_n^*)$, denoted by   $c_{n,1-\alpha}^*$, is used as   critical value. In practice, the critical value can be obtained by  Monte-Carlo simulation. The bootstrap test rejects the null hypothesis $H_0$ if and only if $T_n>c_{n,1-\alpha}^*$.

\section{Simulations}\label{s}

To investigate the performance of the  testing procedure, simulation studies are performed.  In our simulation studies, the Monte-Carlo simulation is based on $2000$ replications. We fix  the significance level at $\alpha=0.05$ and we use $199$ bootstrap replications in each simulation to obtain the critical values of the tests. The null hypothesis under consideration is $ {\mathcal N}_d(\mu,\Sigma)$ with $\mu\in\R^d$ and $\Sigma\in\R^{d\times d}$ symmetric positive definite.  We choose $\mu=0$ and $\Sigma=\frac 1 2( {\bf I}_d+ {\bf 1}_d {\bf 1}_d^\top)$, where ${\bf 1}_d=(1,\dots,1)^\top$ and ${\bf I}_d=\operatorname{diag}({\bf 1}_d)$. For the implementation of our test, we treat the situation in Example \ref{exa0} (b) and use the corresponding estimators in  \ref{exa1}. We choose equal weights in the test statistic and treat the case that the components of $I$ are independent, each with the same binomial (Bernoulli) distribution with a given probability of missingness $\lambda\in[0,1)$. In our first simulation study, we consider different contamination  alternatives of the form $X=(1-B)Y+BZ$, where $B$ is a binomial (Bernoulli) random variable with probability $c\in[0,1]$, $Y$ is a random vector that follows the distribution from the null hypothesis,   $Z$ is a  $d$-dimensional random vector that follows an  alternative distribution and the random variables $B,Y,Z$ are independent. The values of  $c$ reflect the grade of contamination of the null hypothesis  by the alternative. As alternative distributions, we consider a multivariate Laplace distribution  $Z=\mu+\sqrt{V}\Sigma^\frac{1}{2}W$, where $V$ has a standard exponential distribution, $W$ has a $d$-dimensional standard normal distribution and $V$ and $W$ are independent, see \citet{Devroye_mLaplace}, as well as a multivariate $t$ distribution $Z=\mu+\sqrt{\frac{\nu}{V}}\Sigma^\frac{1}{2}W$, where $\nu\in\N$, $V$ has a $\chi^2$-distribution with $\nu$ degrees of freedom, $W$ has a $d$-dimensional standard normal distribution and $V$ and $W$ are independent, see \citet{Hofert_mt}. Furthermore, we treat the fixed probability of missingness $\lambda=0.106$. Empirical rejection rates (in \%) are displayed in Table \ref{tab:Simulation_Power}. Noticing that the settings with $c=0$ correspond to the case that the null hypothesis is true, we see that the test keeps the level of $\alpha=0.05$. Furthermore, the power of the test increases as the rate of contamination  increases and as  the sample size increases, which is  reasonable. For the  multivariate $t$ distribution as alternative, we see that the power decreases if the degrees of freedom increase. This is reasonable because  a multivariate $t$ distribution is more similar to  a multivariate normal distribution for larger  degrees of freedom. In cases with $c=1$, we have the same simulation settings as in \cite{Aleksić09122024}. Due to the problems mentioned that arise with this approach, there is only less comparability between both  tests.  Ignoring this limitations, we find that our test is comparatively  conservative and  cannot keep up with  the power values obtained in  \cite{Aleksić09122024} in this setting. This finding is in contrast to the settings in our next simulation study. Here, we examine the empirical power for higher probabilities of missingness $\lambda$. We consider two  classes of alternatives in the case $d=3$. The first one is given by a convolution of the form $X=Y+Z$, where   $Y$ is a random vector that follows the distribution from the null hypothesis and $Z$ follows the  uniform distribution on the centred cube with side length $2c$, $c\in[0,1]$, independent of $Y$. The second one  is an equal mixture of the form  $X=(1-B)Y+BZ$, where $B$ is a binomial (Bernoulli) random variable with probability $0.5$, $Y$ or $Z$ is a random vector that follows  a multivariate  normal distribution with expectation vector   $+  (2c,2c,2c)^\top$ or $-  (2c,2c,2c)^\top$, covariance matrix $\Sigma$ as stated  and the random variables $B,Y,Z$ are independent. Empirical rejection rates (in \%) are displayed in Table \ref{tab:new_alternatives}. Noticing again that the settings with $c=0$ correspond to the case that the null hypothesis is true, we see that the test keeps the level of $0.05$. The power of the test increases as the paramter $c$  increases and as  the sample size increases, which is  reasonable. We see that the power decreases if the probability of missingness increases. This is reasonable because  a increasing probability of missingness results in a decreasing amount of data effectively available. In particular, we see that our test is applicable and has power also for higher probabilities of missingness. In contrast, additional simulations conducted show that the test of  \cite{Aleksić09122024} is not able to detect the alternatives sufficiently in these settings.

    \begin{table}[htbp]
        \centering
        \begin{tabular}{rrrrrrrrrr}
\hline
            & & \multicolumn{2}{c}{Laplace alt.} & \multicolumn{6}{c}{$t$ alt.}\\
\hline
            & & $d=2$ & $d=3$ & \multicolumn{3}{c}{$d=2$} & \multicolumn{3}{c}{$d=3$} \\
\hline
             $n$ & $c$ &  &  & $\nu=5$ & $\nu=7$ & $\nu=11$ & $\nu=5$ & $\nu=7$ & $\nu=11$ \\
             \hline
             30&0.0& 5.75   &  5.20  & 5.50    & 4.05  & 4.55  & 5.10   & 5.05   & 4.90    \\
             & 0.2 & 7.45   &  5.65  & 8.80    & 7.65  & 6.50   & 10.05 & 5.30    & 4.85    \\
             & 0.4 & 11.30   &  9.80  & 12.50   & 9.60   & 5.80   & 13.55 & 8.60    & 5.80  \\ 
             & 0.6 & 17.40  & 18.40  & 17.25   & 10.75 & 8.70   & 16.15 & 11.45  & 6.35 \\
             & 0.8 & 30.30  & 27.55  & 20.95   & 13.35 & 8.25  & 21.30  & 11.95  & 7.85  \\
             & 1.0 & 36.45  & 39.65  & 22.10   & 14.10  & 9.95  & 23.50  & 13.75  & 8.75   \\
             \hline
             60 & 0.0 & 4.85   & 5.40  & 5.25   & 4.70   & 5.35  & 4.95   & 5.15  & 4.05    \\
                & 0.2 & 9.00   & 8.85  & 12.40   & 8.60   & 7.30   & 12.60   & 7.30   & 5.30  \\ 
                & 0.4 & 18.6  & 18.15 & 19.95  & 12.45 & 7.35  & 20.25  & 11.80  & 6.55   \\
                & 0.6 & 33.25  & 35.00 & 26.55  & 15.60  & 9.60   & 28.70   & 15.50  & 8.40    \\
                & 0.8 & 50.05  & 58.60 & 34.55  & 19.85 & 10.35 & 37.90   & 20.80  & 10.35  \\
                & 1.0 & 67.75  & 76.45 & 39.80   & 22.55 & 11.95 & 46.10   & 23.80  & 11.70    \\
             \hline
             90 & 0.0 & 6.20   & 4.80   & 5.05  & 5.65   & 5.55  & 5.70    & 5.25   & 5.40   \\
                & 0.2 & 10.15   & 10.70  & 13.90  & 11.20   & 7.45  & 14.75  & 8.65   & 7.00    \\ 
                & 0.4 & 24.60  & 26.45  & 26.40  & 14.35  & 9.50   & 26.75  & 14.15  & 7.65  \\ 
                & 0.6 & 46.00  & 50.60  & 35.95 & 19.50   & 9.90   & 37.80   & 21.00   & 10.80    \\ 
                & 0.8 & 70.05  & 77.60  & 47.60  & 25.35  & 12.80  & 51.55  & 28.40   & 13.75   \\
                & 1.0 & 84.10  & 92.10  & 53.35 & 31.70   & 14.15 & 63.35  & 36.60   & 14.50   \\
             \hline
            120 & 0.0 & 5.20  & 4.40   & 5.25   & 4.60   & 5.20   & 5.25   & 3.85   & 4.75   \\ 
                & 0.2 & 12.30   & 11.10  & 17.10   & 10.00    & 7.60   & 16.70   & 9.50    & 5.75   \\ 
                & 0.4 & 30.50  & 34.35  & 32.65  & 16.65 & 8.85  & 33.65  & 16.45  & 8.90  \\ 
                & 0.6 & 58.10  & 66.15  & 44.65  & 23.00    & 11.70  & 50.60   & 25.65  & 12.65   \\
                & 0.8 & 81.60  & 90.60  & 56.15  & 32.10  & 16.20  & 66.25  & 36.70   & 14.65   \\ 
                & 1.0 & 94.55  & 97.85  & 68.15  & 38.75 & 17.05 & 75.60   & 45.10   & 18.05 \\
\hline
       \end{tabular}
        \caption{Empirical rejection rates (in \%) of our test at level  $\alpha=0.05$ for  contamination alternatives with a multivariate  Laplace and a multivariate $t$ distribution with different degrees of freedom $\nu$, different dimensions $d$, different grades of contamination $c$ and different sample sizes $n$.}
        \label{tab:Simulation_Power}
    \end{table}

\begin{table}[htbp]
    \centering
    \begin{tabular}{rrrrrrrr}
\hline
       & & \multicolumn{3}{c}{Convolution alt.} & \multicolumn{3}{c}{Mixture alt.}\\
\hline
$n$&$c$&$\lambda=0.1$&$\lambda=0.2$&$\lambda=0.4$&$\lambda=0.1$&$\lambda=0.2$&$\lambda=0.4$\\
\hline
60&0.0&4.90&3.75&2.25&4.45&4.00&2.10\\
60&0.2&4.10&4.40&2.30&5.25&4.65&1.90\\
60&0.4&6.75&5.25&3.70&7.55&6.45&2.50\\
60&0.6&15.30&11.50&6.60&41.00&33.55&14.25\\
60&0.8&26.85&22.55&12.60&92.60&87.95&60.10\\
60&1.0&40.90&34.25&19.60&99.95&99.70&97.45\\
\hline
90&0.0&5.70&4.55&1.60&4.95&4.15&1.95\\
90&0.2&6.15&4.75&1.75&4.55&5.50&1.95\\
90&0.4&5.75&6.05&3.65&8.55&7.75&3.20\\
90&0.6&20.30&17.75&8.60&60.15&52.95&26.35\\
90&0.8&41.05&36.25&22.75&98.75&98.40&86.95\\
90&1.0&62.45&51.15&31.85&100.00&100.00&99.70\\
\hline
120&0.0&3.65&4.55&2.25&4.80&4.55&2.40\\
120&0.2&4.80&4.10&2.50&3.90&4.90&2.25\\
120&0.4&8.55&7.80&2.95&11.60&9.15&4.15\\
120&0.6&25.40&22.20&12.05&76.25&66.3&39.45\\
120&0.8&56.75&49.85&30.00&100.00&99.85&97.15\\
120&1.0&77.80&70.05&44.9&100.00&100.00&99.95\\
\hline
    \end{tabular}
    \caption{Empirical rejection rates (in \%) of our test at level  $\alpha=0.05$ for convolution and mixture alternatives with different parameters $c$, different sample sizes $n$ and different probabilities of missingness $\lambda$.}
    \label{tab:new_alternatives}
\end{table}

\color{black}

\section{Real data example}\label{rde}

As a real data example, we consider daily air quality measurements in New York for the time period  May 1, 1973 to September 30, 1973, given by the mean ozone (in parts per billion  and measured at Roosevelt Island from 13:00 to 15:00) and the solar radiation (in Langleys in the frequency band 4000-7700 Angstroms  and measured at Central Park from 8:00 to 12:00).  The dataset consists of 153 observations.  For the variables ozone and  solar radiation, the measurements for 37 and 7 days are missing, respectively, where in two cases,  both measurements are missing at the same time. The dataset is available in the statistical software R under the name airquality, see also \citet{chambers1983graphical}. For the implementation of our test, we treat the situation in Example \ref{exa0} (b) and use the corresponding estimators in  \ref{exa1}. Moreover, we choose equal weights in the test statistic. Based on 10000 bootstrap replications, we obtain an estimated $p$-value of 0.0227. The null  hypothesis of multivariate normality is rejected at the significance level $\alpha=0.05$.

\appendix

\section{Proofs}\label{p}

\begin{lemma}\label{lem1}
Given $k\in\N$, let $\vartheta\in\R^k$ and let $\widehat{\vartheta}_n$ be an estimator of  $\vartheta$ with values in $\R^k$, where the estimator is given by an appropriate measurable function applied to the sample   $(I_i,I_i\odot X_i)$, $i=1,\dots,n$. Assume
$$\widehat{\vartheta}_n\overset{a.s.}{\longrightarrow}  \vartheta~\text{as}~n\to\infty$$ 
and suppose the existence of a measurable map $l_\vartheta:\R^d\times\R^d\rightarrow \R^k$ with $\E(|l_\vartheta(I,I\odot X)|^2)<\infty$,  $\E(l_\vartheta(I,I\odot X))=0$  and 
\begin{equation*}
    \sqrt{n}(\widehat{\vartheta}_n-\vartheta)=\frac{1}{\sqrt{n}}\sum_{i=1}^n l_\vartheta(I_i,I_i\odot X_i)+\oP(1)~\text{as}~n\rightarrow\infty.
\end{equation*}
Given $\ell\in\N$, let $f:\R^k\rightarrow \R^\ell$ be differentiable in $\vartheta$. Then, we have
$$f(\widehat{\vartheta}_n)\overset{a.s.}{\longrightarrow}  f(\vartheta)~\text{as}~n\to\infty$$ 
and  the existence of a measurable map $l_{f(\vartheta)}:\R^d\times\R^d\rightarrow \R^\ell$ with $\E(|l_{f(\vartheta)}(I,I\odot X)|^2)<\infty$,  $\E(l_{f(\vartheta)}(I,I\odot X))=0$  and 
\begin{equation*}
    \sqrt{n}(f(\widehat{\vartheta}_n)-f(\vartheta))=\frac{1}{\sqrt{n}}\sum_{i=1}^n l_{f(\vartheta)}(I_i,I_i\odot X_i)+\oP(1)~\text{as}~n\rightarrow\infty.
\end{equation*}
\end{lemma}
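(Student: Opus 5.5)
This is the delta method, split into the almost sure part and the linear representation part.

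\emph{Almost sure convergence.} Differentiability of $f$ in $\vartheta$ implies continuity of $f$ at $\vartheta$. Let $\Omega_0$ be the event of probability one on which $\widehat{\vartheta}_n\to\vartheta$. On $\Omega_0$, continuity at $\vartheta$ gives $f(\widehat{\vartheta}_n)\to f(\vartheta)$, which is the first claim.

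\emph{Choice of the influence function.} Let $J=Df(\vartheta)\in\R^{\ell\times k}$ be the Jacobian matrix and define $l_{f(\vartheta)}=J\,l_\vartheta$. This map is measurable as a linear image of a measurable map. Linearity of expectation gives $\E(l_{f(\vartheta)}(I,I\odot X))=J\,\E(l_\vartheta(I,I\odot X))=0$. The operator norm bound gives
\[
\E\big(|l_{f(\vartheta)}(I,I\odot X)|^2\big)\le |J|^2\,\E\big(|l_\vartheta(I,I\odot X)|^2\big)<\infty .
\]

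\emph{Linearization.} Write
\[
f(\widehat{\vartheta}_n)-f(\vartheta)=J(\widehat{\vartheta}_n-\vartheta)+r(\widehat{\vartheta}_n-\vartheta),
\]
where $r(h)=f(\vartheta+h)-f(\vartheta)-Jh$ satisfies $|r(h)|\le\epsilon(h)|h|$ with $\epsilon(h)\to0$ as $h\to0$. Set $\epsilon(0)=0$ so that $\epsilon$ is continuous at $0$. Multiplying by $\sqrt n$ and inserting the assumed representation of $\sqrt n(\widehat{\vartheta}_n-\vartheta)$ yields
\[
\sqrt n\big(f(\widehat{\vartheta}_n)-f(\vartheta)\big)=\frac1{\sqrt n}\sum_{i=1}^n J\,l_\vartheta(I_i,I_i\odot X_i)+J\,\oP(1)+\sqrt n\,r(\widehat{\vartheta}_n-\vartheta).
\]
The term $J\,\oP(1)$ is $\oP(1)$ because $J$ is a fixed matrix. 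It remains to show that the remainder $\sqrt n\,r(\widehat{\vartheta}_n-\vartheta)$ is $\oP(1)$.

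\emph{Tightness of $\sqrt n(\widehat{\vartheta}_n-\vartheta)$.} The sum $n^{-1/2}\sum_{i=1}^n l_\vartheta(I_i,I_i\odot X_i)$ has i.i.d.\ centered summands with finite second moments. By the multivariate central limit theorem it converges in distribution, so it is bounded in probability. (Chebyshev's inequality also suffices: the second moment of the sum is bounded by $\E|l_\vartheta(I,I\odot X)|^2$.) Adding the $\oP(1)$ term, $\sqrt n|\widehat{\vartheta}_n-\vartheta|=O_P(1)$.

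\emph{The remainder.} We have
\[
\sqrt n\,|r(\widehat{\vartheta}_n-\vartheta)|\le \epsilon(\widehat{\vartheta}_n-\vartheta)\,\sqrt n|\widehat{\vartheta}_n-\vartheta|.
\]
The first factor tends to $0$ almost surely, since $\widehat{\vartheta}_n\to\vartheta$ almost surely and $\epsilon$ is continuous at $0$. The second factor is $O_P(1)$ by the previous step. A product of an $\oP(1)$ term and an $O_P(1)$ term is $\oP(1)$, which completes the representation.

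The only point needing care is this remainder step, and it is routine once the definition of the Fréchet derivative and the $O_P(1)$ bound from the CLT are in place.
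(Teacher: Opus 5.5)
Your proposal is correct and follows the same route as the paper: almost sure convergence from continuity of $f$ at $\vartheta$, the choice $l_{f(\vartheta)}=J_f(\vartheta)\,l_\vartheta$ with the same moment bounds, and a first-order Taylor remainder that is $\oP(1)$ relative to $|\widehat{\vartheta}_n-\vartheta|$. You also make explicit two points the paper's proof uses only implicitly under the heading of Slutsky's theorem: the bound $\sqrt n|\widehat{\vartheta}_n-\vartheta|=O_P(1)$ (via the CLT or Chebyshev), and the convention $\epsilon(0)=0$, which avoids dividing by $|\widehat{\vartheta}_n-\vartheta|$ when it vanishes.
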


\begin{proof}[Proof of Lemma \ref{lem1}]
Due to the continuity of $f$ in $\vartheta$, the first statement follows, that is $f(\widehat{\vartheta}_n)\overset{a.s.}{\rightarrow}  f(\vartheta)$ as $n\to\infty$. To show the second statement, denote by  $J_f(\vartheta)$ the derivative of $f$ in  $\vartheta$.   A Taylor explansion serves
\begin{equation*}
f(\widehat{\vartheta}_n)=f(\vartheta)+J_f(\vartheta)(\widehat{\vartheta}_n-\vartheta)+R(\widehat{\vartheta}_n)
\end{equation*}
with a remainder term $R(\widehat{\vartheta}_n)$ that satisfies 
\begin{equation*}
\frac{|R(\widehat{\vartheta}_n)|}{|\widehat{\vartheta}_n-\vartheta|}=\oP(1)~\text{as}~n\rightarrow\infty.
\end{equation*}
Combining this with Slutsky's theorem yields
\begin{align*}
   \sqrt{n}(f(\widehat{\vartheta}_n)-f(\vartheta))= & \sqrt{n}J_f(\vartheta)(\widehat{\vartheta}_n-\vartheta)+ \sqrt{n}R(\widehat{\vartheta}_n)\\
= &J_f(\vartheta)\sqrt{n}(\widehat{\vartheta}_n-\vartheta)+|\sqrt{n}(\widehat{\vartheta}_n-\vartheta)|\frac{R(\widehat{\vartheta}_n)}{| \widehat{\vartheta}_n-\vartheta|}\\
= &J_f(\vartheta)\frac{1}{\sqrt{n}}\sum_{i=1}^n l_\vartheta(I_i,I_i\odot X_i)+J_f(\vartheta)\oP(1)+|\sqrt{n}(\widehat{\vartheta}_n-\vartheta)|\oP(1)\\
=&\frac{1}{\sqrt{n}}\sum_{i=1}^n l_{f(\vartheta)}(I_i,I_i\odot X_i)+\oP(1)~\text{as}~n\rightarrow\infty,
\end{align*}
where $l_{f(\vartheta)}$ is defined by
\begin{align*}
   l_{f(\vartheta)}(I_i,I_i\odot X_i)=J_f(\vartheta) l_\vartheta(I_i,I_i\odot X_i)
\end{align*}
for $i=1,\dots,n$.  It is
$$\E(|l_{f(\vartheta)}(I,I\odot X)|^2)\le  |J_f(\vartheta)|^2\E(|l_\vartheta(I,I\odot X)|^2)<\infty$$
and
 $$\E(l_{f(\vartheta)}(I,I\odot X))=J_f(\vartheta)\E(l_\vartheta(I,I\odot X))=0.$$ 
This completes the proof.
\end{proof}

\begin{proof}[Proof of Theorem \ref{thm1}]
At first, let us consider the estimator $\widehat \mu_n$ in Example  \ref{exa1} (a).  We prove the stated translation equivariance  first. It is for all $c\in\R^d$ and all  $j=1,\dots,d$

\begin{align*}
  \widehat{\mu}_n(j)&=\frac{\sum_{i=1}^n I_i(j)(X_i(j)+c(j))}{\sum_{i=1}^nI_i(j)}\\
&=\frac{\sum_{i=1}^n I_i(j)X_i(j)}{\sum_{i=1}^nI_i(j)}+\frac{\sum_{i=1}^n I_i(j)c(j)}{\sum_{i=1}^nI_i(j)}\\
&=\frac{\sum_{i=1}^n I_i(j)X_i(j)}{\sum_{i=1}^nI_i(j)}+c(j)
\end{align*}
and so the equivariance  is valid. To show that Assumption \ref{ass2b}  holds, use the strong law of large numbers to see that
\begin{equation*}
    \widehat{\mu}_n(j)=\frac{\frac 1 n \sum_{i=1}^n I_i(j)X_i(j)}{\frac 1 n \sum_{i=1}^nI_i(j)}\overset{a.s.}{\longrightarrow}  \frac{P(I(j)=1)\E(X(j))}{P(I(j)=1)}=\mu(j)~\text{as}~n\to\infty
\end{equation*}
for all  $j=1,\dots,d$. Now, we show  the validity of Assumption \ref{ass3}. It is 
\begin{align*}
    \sqrt{n}(\widehat{\mu}_n(j)-\mu(j))=&\sqrt{n} \frac{\sum_{i=1}^nI_i(j)X_i(j)}{\sum_{i=1}^nI_i(j)} - \mu(j)\\
    =& \sqrt{n} \frac{\frac{1}{n}\sum_{i=1}^nI_i(j)(X_i(j)-\mu(j))}{\frac{1}{n}\sum_{i=1}^nI_i(j)} \\
    =& \sqrt{n} \frac{\frac{1}{n}\sum_{i=1}^nI_i(j)(X_i(j)-\mu(j))}{\frac{1}{n}\sum_{i=1}^nI_i(j)} - \sqrt{n} \frac{\frac{1}{n}\sum_{i=1}^nI_i(j)(X_i(j)-\mu(j))}{P(I(j)=1)}\\
     &+  \sqrt{n}\frac{\frac{1}{n}\sum_{i=1}^nI_i(j)(X_i(j)-\mu(j))}{P(I(j)=1)}\\
    =& \frac{1}{\sqrt{n}}\sum_{i=1}^n\frac{I_i(j)(X_i(j)-\mu(j))}{P(I(j)=1)}\\
    &-  \frac{ \sqrt{n}(  \frac{1}{n}\sum_{i=1}^nI_i(j)-P(I(j)=1)  )\frac{1}{n}\sum_{i=1}^nI_i(j)(X_i(j)-\mu(j))}{P(I(j)=1)\frac{1}{n}\sum_{i=1}^nI_i(j)}
\end{align*}
for all  $j=1,\dots,d$. From the central limit theorem, it is
$$\sqrt{n}\bigg(  \frac{1}{n}\sum_{i=1}^nI_i(j)-P(I(j)=1)  \bigg)\overset{d}{\longrightarrow}\mathcal N(0,P(I(j)=1)(1-P(I(j)=1)))~\text{as}~n\to\infty$$ 
for all  $j=1,\dots,d$, the   strong law of large numbers implies that
$$\frac{1}{n}\sum_{i=1}^nI_i(j)(X_i(j)-\mu(j))\overset{a.s.}{\longrightarrow}  0~\text{as}~n\to\infty$$
for all  $j=1,\dots,d$ and  Slutsky's theorem yields
$$\frac{ \sqrt{n}\left(  \frac{1}{n}\sum_{i=1}^nI_i(j)-P(I(j)=1)  \right)\frac{1}{n}\sum_{i=1}^nI_i(j)(X_i(j)-\mu(j))}{P(I(j)=1)\frac{1}{n}\sum_{i=1}^nI_i(j)}\overset{P}{\longrightarrow} 0~\text{as}~n\to\infty$$
for all  $j=1,\dots,d$.  Applying Slutsky's theorem again serves Assumption \ref{ass3} with $l_\mu$ given by
$$l_\mu(I_i,I_i\odot X_i)(j)=\frac{I_i(j) (X_i (j)- \mu(j))}{P(I(j)=1)}$$
for  $i=1,\dots,n$ and for all  $j=1,\dots,d$. Note that the moment conditions stated are obviously satisfied. Now, let us go to the estimator $\widehat \Sigma_n$ in Example  \ref{exa1} (b). The stated translation invariance is obviously true. At first, we show that Assumption \ref{ass2b} holds. Due to the invariance of this estimator under transformations of the form $x\mapsto x+c$ for $c\in\R^d$ applied to $X_1,\dots,X_n$, we suppose that $\mu=0$ without loss of generality.  Consider the matrix  $\widehat M_n$ defined in Example  \ref{exa1} (b). It follows  from the strong law of large numbers that
\begin{equation*}
    \widehat M_n\overset{a.s.}{\longrightarrow} M~\text{as}~n\to\infty,
\end{equation*}
where $M=\E(II^\top)$. Considering the matrix  $\widehat \Lambda_n$ defined in Example  \ref{exa1} (b), the strong law of large numbers and  the results fo the estimator $\widehat{\mu}_n$ in  Example  \ref{exa1} (a) imply
\begin{align*}
    \widehat{\Lambda}_n(j,k)=&\frac{1}{n}\sum_{i=1}^nI_i(j) (X_i(j)-\widehat{\mu}_n(j))I_i(k) (X_i(k)-\widehat{\mu}_n(k))\\
=&\frac{1}{n}\sum_{i=1}^nI_i(j) X_i(j)I_i(k) X_i(k)-\widehat{\mu}_n (j)\frac{1}{n}\sum_{i=1}^nI_i(j)I_i(k) X_i(k)\\
&-\widehat{\mu}_n(k)\frac{1}{n}\sum_{i=1}^nI_i(j)  X_i(j)I_i(k)+ \widehat{\mu}_n(j)\widehat{\mu}_n(k)\frac{1}{n}\sum_{i=1}^nI_i(j)I_i(k)\\
&\overset{a.s.}{\longrightarrow} \Lambda(j,k)~\text{as}~n\to\infty
\end{align*}
for all $j,k=1,\dots,d$, where $\Lambda=\E((I\odot X)(I\odot X)^\top)$. From the continuity of the related map, it follows that 
\begin{equation*}
    \widehat{S}_n=\widehat{\Lambda}_n\odot\widehat{M}_n^{\odot-1}\overset{a.s.}{\longrightarrow}S~\text{as}~n\to\infty,
\end{equation*}
where $S={\Lambda}\odot{M}^{\odot-1}$. Noticing that
\begin{align*}
S(j,k)&=\frac{\Lambda(j,k)}{ M(j,k)}=\frac{\E(I(j)X(j)I(k)X(k))}{\E(I(j)I(k))}=\E(X(j)X(k))=\Sigma(j,k)
\end{align*}
for all $j,k=1,\dots,d$, it follows from the continuity of the related map that
\begin{equation*}
    \widehat{\Sigma}_n=\sqrt{\widehat{S}_n\widehat{S}_n}\overset{a.s.}{\longrightarrow} \sqrt{SS}=S=\Sigma~\text{as}~n\to\infty
\end{equation*}
and so Assumption \ref{ass2b}  is valid.  Now, we show  the validity of Assumption \ref{ass3}. Obviously, the matrix $\widehat{M}_n$ satisfies
\begin{equation*}
    \sqrt{n}(\widehat{M}_n-M)=\frac{1}{\sqrt{n}}\sum_{i=1}^nl_M(I_i,I_i\odot X_i),
\end{equation*}
where $  l_M$ is given by
\begin{equation*}
    l_M(I_i,I_i\odot X_i)= I_iI_i^\top - M
\end{equation*}
for $i=1,\dots,n$. Moreover, the matrix $\widehat{\Lambda}_n$ satisfies
\begin{align*}
  \sqrt{n}(\widehat{\Lambda}_n(j,k)-\Lambda(j,k))=&\sqrt{n} \bigg( \frac{1}{n}\sum_{i=1}^n \big(I_i(j)(X_i(j)-\widehat{\mu}_n(j))\big)\big(I_i(k)(X_i(k)-\widehat{\mu}_n(k))\big) - \Lambda(j,k) \bigg)\\
    =& \frac{1}{\sqrt{n}}\sum_{i=1}^n (I_i(j)X_i(j)I_i(k)X_i(k) - \Lambda(j,k))\\
& -\sqrt{n}\widehat{\mu}_n (j)\frac{1}{n}\sum_{i=1}^nI_i(j)I_i(k) X_i(k)-\sqrt{n}\widehat{\mu}_n(k)\frac{1}{n}\sum_{i=1}^nI_i(j)  X_i(j)I_i(k)\\
&+ \sqrt{n}\widehat{\mu}_n(j)\widehat{\mu}_n(k)\frac{1}{n}\sum_{i=1}^nI_i(j)I_i(k)
\end{align*}
for all $j,k=1,\dots,d$.  Combining the strong law of large numbers,  the results for the estimator $\widehat{\mu}_n$ in  Example  \ref{exa1} (a) and  Slutsky's theorem serves 
\begin{align*}
  \sqrt{n}(\widehat{\Lambda}_n-\Lambda)=\frac{1}{\sqrt{n}}\sum_{i=1}^n l_{\Lambda}(I_i,I_i\odot X_i)+\oP(1)~\text{as}~n\rightarrow\infty,
\end{align*}
where $ l_\Lambda$ is given by
\begin{equation*}
    l_\Lambda(I_i,I_i\odot X_i)=(I_i\odot X_i)(I_i\odot X_i)^\top-\Lambda
\end{equation*}
for $i=1,\dots,n$. Consider the  maps $f$ and $g$, given by  $f(S)=\sqrt {SS}$ and $g(\Lambda,M)={\Lambda}\odot{M}^{\odot-1}$, that are  differentiable in the underlying paramters $S=\Sigma$ and  $(\Lambda,M)$ with $S=\Sigma={\Lambda}\odot{M}^{\odot-1}$. The composition $h=f \circ g$ is also   differentiable in  $(\Lambda,M)$.  Dealing with the occuring matrices or pairs of matrices as vectors for a moment, that can be obtained, e.g., by applicaiton of the vectorization operation, and applying Lemma \ref{lem1}, shows that Assumption \ref{ass3} is satisfied. For the estimators $\widehat{p}_n$ in  Example  \ref{exa1} (a), the stated translation invariance, Assumption \ref{ass2b} and Assumption \ref{ass3} are  valid. These statements are either obvious or can be easily obtained  from Lemma \ref{lem1}.
\end{proof}

\begin{proof}[Proof of Theorem \ref{thm2}]
Let us suppose that the null hypothesis $H_0$ is valid. Let us fix some arbitrary $\pi\in\Pi$. For  $k=1,\dots,n$ and all $t\in\R^{d_\pi}$, a first order Taylor expansion of the functions $\sin$ and $\cos$ about $\scalar{\pi(I_k\odot(X_k-\mu))}{t}$ yields (random) numbers $\xi_{n,\pi,k}(t)$ and $\eta_{n,\pi,k}(t)$ between  $\scalar{\pi(I_k\odot(X_k-\widehat{\mu}_n))}{t}$ and $\scalar{\pi(I_k\odot(X_k-\mu))}{t}$ such that
    \begin{align*}
     &\exp(i\scalar{\pi(I_k\odot(X_k-\widehat{\mu}_n))}{t})\\
        =&\cos\scalar{\pi(I_k\odot(X_k-\widehat{\mu}_n))}{t}+i\sin\scalar{\pi(I_k\odot(X_k-\widehat{\mu}_n))}{t}\\
        =&\cos\scalar{\pi(I_k\odot(X_k-\mu))}{t}-\sin(\xi_{n,\pi,k}(t))\scalar{\pi(I_k\odot(\mu-\widehat{\mu}_n)}{t}\\
        &+i\sin\scalar{\pi(I_k\odot(X_k-\mu))}{t}+i\cos(\eta_{n,\pi,k}(t))\scalar{\pi(I_k\odot(\mu-\widehat{\mu}_n))}{t}\\
        =&\exp(i\scalar{\pi(I_k\odot(X_k-\mu))}{t})+\scalar{\pi(I_k\odot(\mu-\widehat{\mu}_n))}{t}(i\cos(\eta_{n,\pi,k}(t))-\sin(\xi_{n,\pi,k}(t))).
    \end{align*}
Setting
    \begin{align*}
 \varphi_{n,\pi}(t)=\frac{1}{n}\sum_{k=1}^n\exp(i\scalar{\pi(I_k\odot(X_k-\mu))}{t}), ~t\in\R^{d_\pi},
    \end{align*}
we obtain for  all $t\in\R^{d_\pi}$ that
    \begin{align}
&|  \widehat  \varphi_{n,\pi}(t)-  \widehat \phi_{n,\pi}(t)-   (\varphi_{\pi}(t)-   \phi_{\pi}(t))|\notag\\
        =&| \varphi_{n,\pi}(t)-   \varphi_{\pi}(t)- ( \widehat \phi_{n,\pi}(t)-  \phi_{\pi}(t))\notag\\
&+\frac{1}{n}\sum_{k=1}^n\scalar{\pi(I_k\odot(\mu-\widehat{\mu}_n))}{t}(i\cos(\eta_{n,\pi,k}(t))-\sin(\xi_{n,\pi,k}(t)))|\notag \\
        \leq&| \varphi_{n,\pi}(t)-\varphi_{\pi}(t)|\label{eq:AS_Limit_ugl_1}\\
        &+|\widehat \phi_{n,\pi}(t)-    \phi_{\pi}(t)|\label{eq:AS_Limit_ugl_2}\\
        &+\bigg|\frac{1}{n}\sum_{k=1}^n\scalar{\pi(I_k\odot(\mu-\widehat{\mu}_n))}{t}(i\cos(\eta_{n,\pi,k}(t))-\sin(\xi_{n,\pi,k}(t)))\bigg|.\label{eq:AS_Limit_ugl_3}
    \end{align}
Let us treat  the three terms in \eqref{eq:AS_Limit_ugl_1},  \eqref{eq:AS_Limit_ugl_2} and  \eqref{eq:AS_Limit_ugl_3} separately. Let us start with the treatment of the term \eqref{eq:AS_Limit_ugl_2} first. There exist measurable sets $\Omega_\mu$, $\Omega_\Sigma$ and $\Omega_p$ with $P(\Omega_\mu)=1$, $P(\Omega_\Sigma)=1$ and $P(\Omega_p)=1$ such that   $\widehat \mu_n\rightarrow \mu$ as $n\to \infty$ on $\Omega_\mu$,  $\widehat \Sigma_n\rightarrow\Sigma$ as $n\to \infty$  on $\Omega_\Sigma$ and  $\widehat p_n\rightarrow p$   as $n\to \infty$ on $\Omega_p$. From the continuity of the related map, we have  for  all $t\in\R^{d_\pi}$
    \begin{align*}
        |\widehat \phi_{n,\pi}(t)-    \phi_{\pi}(t)|\longrightarrow 0~\text{as}~n\to \infty
    \end{align*}
 on $\Omega_\Sigma\cap \Omega_p$. Now, let us treat  the term \eqref{eq:AS_Limit_ugl_3}. From Cauchy-Schwarz inequality, it is  for  all $t\in\R^{d_\pi}$ 
  \begin{align*}
        &\bigg|\frac{1}{n}\sum_{k=1}^n\scalar{\pi(I_k\odot(\mu-\widehat{\mu}_n))}{t}(i\cos(\eta_{n,\pi,k}(t))-\sin(\xi_{n,\pi,k}(t)))\bigg|\\
        \leq&2\frac{1}{n}\sum_{k=1}^n\bigg|\scalar{\pi(I_k\odot(\mu-\widehat{\mu}_n))}{t}\bigg|\\
        \leq&2|\pi(\mu-\widehat{\mu}_n)||t| \longrightarrow 0~\text{as}~n\to \infty
    \end{align*}
 on $\Omega_\mu$. It remains to treat  the term \eqref{eq:AS_Limit_ugl_1}. For this purpose, let $D$ be a countable dense subset of $\R^{d_\pi}$, e.g., $D=\Q^{d_\pi}$. From the strong law of large numbers, we have for each $s\in D$ a measurable set $\Omega_s$ with $P(\Omega_s)=1$ and
    \begin{equation*}
       | \varphi_{n,\pi}(s)-\varphi_{\pi}(s)| \longrightarrow 0~\text{as}~n\to \infty
    \end{equation*}
 on $\Omega_s$. Now let  $t\in\R^{d_\pi}$  and $s\in D$ be arbitrary. Then, it is
     \begin{equation}\label{eq:AS_Limit_ugl_4}
        | \varphi_{n,\pi}(t)-\varphi_{\pi}(t)| \leq |\varphi_{n,\pi}(t)-\varphi_{n,\pi}(s)|+|\varphi_{n,\pi}(s)-\varphi_{\pi}(s)|+|\varphi_{\pi}(s)-\varphi_{\pi}(t)|.
    \end{equation}
For the middle term on the right hand side of the  inequality \eqref{eq:AS_Limit_ugl_4}, we have
  \begin{equation*}
       | \varphi_{n,\pi}(s)-\varphi_{\pi}(s)| \longrightarrow 0~\text{as}~n\to \infty
    \end{equation*}
 on $\cap_{s\in D}\Omega_s$. For the last term on the right hand side of the  inequality \eqref{eq:AS_Limit_ugl_4}, we have
   \begin{align*}
      |\varphi_{\pi}(s)-\varphi_{\pi}(t)|=& |\E(\exp(i\scalar{\pi(I\odot(X-\mu))}{s}-\exp(i\scalar{\pi(I\odot(X-\mu))}{t})))|\\
        \leq&|s-t|\E(|\pi(I\odot(X-\mu))|),
    \end{align*}
where   Cauchy-Schwarz inequality and $|\exp(ix)-\exp(iy)|\leq|x-y|$, $x,y\in\R$, is used. Analogously, for the first term on the right hand side of the  inequality \eqref{eq:AS_Limit_ugl_4}, we have
       \begin{align*}
      |\varphi_{n,\pi}(t)-\varphi_{n,\pi}(s)|\leq& \frac{1}{n}\sum_{k=1}^n|\scalar{\pi(I_k\odot(X_k-\mu))}{t-s}|\\
        \leq&|s-t|\frac{1}{n}\sum_{k=1}^n|\pi(I_k\odot(X_k-\mu))| \\
&\longrightarrow |s-t|\E(|\pi(I\odot(X-\mu))|)~\text{as}~n\to \infty
    \end{align*}
on a measurable set $\Omega$  with $P(\Omega)=1$ due to the strong law of large numbers.  Because $s\in D$ is arbitrary and $D$ is a dense subset of $ \R^{d_\pi}$, it follows with $s\to t$ that
  \begin{equation*}
        | \varphi_{n,\pi}(t)-\varphi_{\pi}(t)| \longrightarrow 0~\text{as}~n\to \infty
    \end{equation*}
 on $\cap_{s\in D}\Omega_s\cap \Omega$. Setting $\Omega'=\cap_{s\in D}\Omega_s\cap\Omega_\mu\cap\Omega_\Sigma\cap \Omega_p\cap \Omega$, it is $\Omega'$ a measurable set with $P(\Omega')=1$ such that for all   $t\in\R^{d_\pi}$ 
    \begin{align*}
&\widehat  \varphi_{n,\pi}(t)-  \widehat \phi_{n,\pi}(t)\longrightarrow \varphi_{\pi}(t)-   \phi_{\pi}(t)~\text{as}~n\to \infty
    \end{align*}
on $\Omega'$. Because $\pi\in\Pi$ is arbitrary and $\sup_{t\in\R^{d_\pi}}|\widehat  \varphi_{n,\pi}(t)-  \widehat \phi_{n,\pi}(t)|^2\le 4$, it follows from the dominated convergence theorem that 
\begin{align*}
 \frac{1}{n}T_n=\sum_{\pi\in \Pi}w_\pi\int|   \widehat \varphi_{n,\pi}-  \widehat \phi_{n,\pi}|^2\d\Phi_{d_\pi}\overset{a.s.}{\longrightarrow} \sum_{\pi\in \Pi}w_\pi\int|    \varphi_{\pi}-   \phi_{\pi}|^2\d\Phi_{d_\pi}= \kappa~\text{as}~n\to\infty.
\end{align*}
Without loss of generality, let $\mu=0$. For every $\pi\in\Pi$, it is $ \varphi_{\pi}$  the characteristic function of  $\pi(I\odot X)$ and $ \phi_{\pi}$ the characteristic function of  $\pi(I\odot X)$ under the null hypothesis $H_0$. Under the null hypothesis $H_0$, it is $ \varphi_{\pi}=  \phi_{\pi}$ for all  $\pi\in\Pi$ and so $\kappa=0$. Now, assume that the alternative $H_1$ is valid. Due to $w_{\rm{id}}>0$, it is sufficient to show that $ \varphi_{\rm{id}}(t)\neq\phi_{\rm{id}}(t)$ for at least one $t\in\R^d$. Noticing that the charateristic function is in general continuous, this   implies  $\kappa\ge w_{\rm{id}}\int|    \varphi_{\rm{id}}-   \phi_{\rm{id}}|^2\d\Phi_{d}>0$,   see (the proof of) Theorem 3 in \citet{GaigallWuebbolding_gBM-Test} for details. As it is already seen at the beginning of this work, the distribution $\LL(I,I\odot X)$ uniquely determines the distribution  $\LL(X)$. This completes the proof.
\end{proof}

\begin{proof}[Proof of Theorem \ref{thm2}]

Let us fix some arbitrary $\pi\in\Pi$.  For the proof, remember the  previously introduced expression   $T_{n,\pi}=\int V_{n,\pi}^2\d \Phi_{d_\pi}$. For  $i=1,\dots,n$ and all $t\in\R^{d_\pi}$, second order Taylor expansions  about $\scalar{\pi(I_i\odot(X_i-\mu))}{t}$ yield (random) numbers $\xi_{n,\pi,i}(t)$ and $\eta_{n,\pi,i}(t)$ between  $\scalar{\pi(I_i\odot(X_i-\widehat{\mu}_n))}{t}$ and $\scalar{\pi(I_i\odot(X_i-\mu))}{t}$ such that
    \begin{align*}
           & \cos\scalar{t}{\pi(I_i\odot(X_i-\widehat{\mu}_n))}+ \sin\scalar{t}{\pi(I_i\odot(X_i-\widehat{\mu}_n))}\\
=&\cos\scalar{t}{\pi(I_i\odot(X_i-\mu))}-\sin\scalar{t}{\pi(I_i\odot(X_i-\mu)}\scalar{t}{\pi(I_i\odot(\mu-\widehat{\mu}_n)}\\
&-\frac{1}{2}\scalar{t}{\pi(I_i\odot(\mu-\widehat{\mu}_n))}^2\cos(\eta_{n,\pi,i}(t)) +\sin\scalar{t}{\pi(I_i\odot(X_i-\mu))}\\
&+\cos\scalar{t}{\pi(I_i\odot(X_i-\mu)}\scalar{t}{\pi(I_i\odot(\mu-\widehat{\mu}_n)}-\frac{1}{2}\scalar{t}{\pi(I_i\odot(\mu-\widehat{\mu}_n))}^2\sin(\xi_{n,\pi,i}(t)).
      \end{align*}
Similarily, for  all $t\in\R^{d_\pi}$ and all $a\in\{0,1\}^{d_\pi}$, a second order Taylor expansion about  $(\scalar{D_a\pi(\Sigma)D_a t}{t},\pi(p)(a))^\top$  yields a (random) number  $\zeta_{n,\pi,a}(t)$ in $[0,1]$ such that
    \begin{align*}
            &\exp\Big(-\frac{1}{2}\scalar{D_a\pi(\widehat{\Sigma}_n)D_a t}{t}\Big)\pi(\widehat{p}_n)(a)\\
            =&\exp\Big(-\frac{1}{2}\scalar{D_a\pi(\Sigma)D_a t}{t}\Big)\pi(p)(a)\\
            &-\frac{1}{2}\scalar{D_a\pi(\Sigma-\widehat{\Sigma}_n)D_a t}{t}\exp\Big(-\frac{1}{2}\scalar{D_a\pi(\Sigma)D_a t}{t}\Big)\pi(p)(a)\\
            &+(\pi(p-\widehat{p}_n)(a))\exp\Big(-\frac{1}{2}\scalar{D_a\pi(\Sigma)D_a t}{t}\Big) \\
&+\frac{1}{8}\scalar{D_a\pi(\Sigma-\widehat{\Sigma}_n)D_a t}{t}^2\\
&\exp\Big(-\frac{1}{2}\scalar{(D_a\pi(\Sigma)D_a-\zeta_{n,\pi,a}(t)D_a\pi(\Sigma-\widehat{\Sigma}_n)D_a)t}{t}\Big)\\
            &\pi(p-\zeta_{n,\pi,a}(t)(p-\widehat{p}_n))(a)\\
            &-\frac{1}{2}\scalar{D_a\pi(\Sigma-\widehat{\Sigma}_n)D_a t}{t}\pi(p-\widehat{p}_n)(a)\\
&\exp\Big(-\frac{1}{2}\scalar{(D_a\pi(\Sigma)D_a-\zeta_{n,\pi,a}(t)D_a\pi(\Sigma-\widehat{\Sigma}_n)D_a)t}{t}\Big).
    \end{align*}
Combining the above we obtain for all $t\in\R^{d_\pi}$
    \begin{align*}
          V_{n,\pi}(t)=&\frac{1}{\sqrt{n}}\sum_{i=1}^n\Psi(I_i,I_i\odot X_i,t,\pi)+R_{n,\pi}(t)
    \end{align*}
with a remainder term $R_{n,\pi}(t)$. Noticing that for  $i=1,\dots,n$ and all $t\in\R^{d_\pi}$
    \begin{align*}
&-\frac{1}{\sqrt{n}}\sum_{i=1}^n\sin\scalar{t}{\pi(I_i\odot(X_i-\mu)}\scalar{t}{\pi(I_i\odot(\mu-\widehat{\mu}_n)}\\
&=\frac{1}{\sqrt{n}}\sum_{i=1}^n\scalar{t}{\pi(\E(\sin\scalar{t}{\pi(I\odot(X-\mu)} I)\odot  l_\mu(I,I\odot X)}+\oP(1)~\text{as}~n\rightarrow\infty
    \end{align*}
and
    \begin{align*}
&\frac{1}{\sqrt{n}}\sum_{i=1}^n\cos\scalar{t}{\pi(I_i\odot(X_i-\mu)}\scalar{t}{\pi(I_i\odot(\mu-\widehat{\mu}_n)}=\oP(1)~\text{as}~n\rightarrow\infty
    \end{align*}
under the null hypothesis $H_0$ and  that the matrix
    \begin{align*}
D_a\pi(\Sigma)D_a-\zeta_{n,\pi,a}(t)D_a\pi(\Sigma-\widehat{\Sigma}_n)D_a=D_a\pi(\Sigma)D_a(1-\zeta_{n,\pi,a}(t))+\zeta_{n,\pi,a}(t)D_a\widehat{\Sigma}_nD_a
    \end{align*}
is a convex combination of positive semidefinite matricies als so again a positive semidefinite matrix, it is easy to see that  the remainder term satisfies $|R_{n,\pi}|_{H_\pi}\overset{P}{\rightarrow} 0$ and likewise $|R_{n,\pi}|_{H_\Pi}\overset{P}{\rightarrow} 0$ as $n\to \infty$. Because the null hypothesis $H_0$ is valid, we have
    \begin{align*}
\E( \cos\scalar{t}{\pi(I\odot(X-\mu))})=0
    \end{align*}
and 
    \begin{align*}
\E( \sin\scalar{t}{\pi(I\odot(X-\mu))} =\sum_{a\in\{0,1\}^{d_\pi}}\exp\Big(-\frac{1}{2}\scalar{D_a\pi(\Sigma)D_a t}{t}\Big)\pi(p)(a)
    \end{align*}
for all $t\in\R^{d_\pi}$ and it follows that  $\E(\Psi(I,I\odot X,t,\pi))=0$. It is for all   $t\in\R^{d_\pi}$
\begin{align*}
        (V_{\pi}(t);\pi\in \Pi)=\frac{1}{\sqrt{n}}\sum_{i=1}^n(\Psi(I_i,I_i\odot X_i,t,\pi);\pi\in \Pi)+R_{n}(t)
    \end{align*}
a vector-valued stochastic process with values in the separable Hilbert space $H$ and $R_{n}(t)$ a remainder term  that satisfies $|R_{n,\pi}|_{H}\overset{P}{\rightarrow} 0$ as $n\to \infty$. A general version of the central limit theorem in separable Hilbert spaces, see Theorem 7.5.1 in  \citet{LahaRohatgi}, implies
\begin{align*}
        (V_{\pi};\pi\in \Pi)\overset{d}{\longrightarrow} (V_{\pi};\pi\in \Pi)~\text{as}~n\to \infty,
    \end{align*}
where $(V_{\pi};\pi\in \Pi)$ is a   vector-valued Gaussian process with values in $H$ and with expectation function given by
$$\E(V_{\pi}(t))=\E(\Psi(I,I\odot X,t,\pi))=0,~t\in\R^{d_\pi},~\pi\in \Pi,$$
and covariance function given by
$$\Cov(V_{\pi}(t),V_{\tau}(s))=\E(\Psi(I,I\odot X,t,\pi) \Psi(I,I\odot X,s,\tau)),~t\in\R^{d_\pi},~s\in\R^{d_\tau},~\pi,\tau\in \Pi.$$
Because the map $(v_{\pi};\pi\in \Pi)\mapsto \int \sum_{\pi\in \Pi}w_\pi v_{\pi}^2\d\bigotimes\limits_{\pi\in\Pi}\Phi_{d_\pi}$, $(v_{\pi};\pi\in \Pi)\in H$, is continuous, the stated convergence in distribution of the test statistic follows from the continuous mapping theorem.
\end{proof}

%% Loading bibliography style file
%\bibliographystyle{model1-num-names}
\bibliographystyle{apacite}

% Loading bibliography database

\end{document}